\definecolor{uuuuuu}{rgb}{0.27,0.27,0.27}
\definecolor{sqsqsq}{rgb}{0.1255,0.1255,0.1255}
\newtheorem{definition}{Definition} [section]
\newtheorem{theorem}[definition]{Theorem}
\newtheorem{lemma}[definition]{Lemma}
\newtheorem{corollary}[definition]{Corollary}
\newtheorem{claim}[definition]{Claim}
\newtheorem{problem}[definition]{Problem}
\begin{document}
\title{\bf\Large Hypergraph anti-Ramsey theorems}

\date{}
\author[1]{Xizhi Liu\thanks{Research was supported by ERC Advanced Grant 101020255 and Leverhulme Research Project Grant RPG-2018-424. Email: \texttt{xizhi.liu.ac@gmail.com}}}
\author[2]{Jialei Song\thanks{Research was supported by Science and Technology Commission of Shanghai Municipality (No. 22DZ2229014) and National Natural Science Foundation of China (No. 12331014). Email: \texttt{jlsong@math.ecnu.edu.cn}}}

\affil[1]{Mathematics Institute and DIMAP,
            University of Warwick, 
            Coventry, CV4 7AL, UK}
\affil[2]{
School of Mathematical Sciences,  Key Laboratory of MEA (Ministry of Education) \& Shanghai Key Laboratory of PMMP,  East China Normal University, Shanghai 200241, China
}
\maketitle
\begin{abstract}
The anti-Ramsey number $\mathrm{ar}(n,F)$ of an $r$-graph $F$ is 
the minimum number of colors needed to color the complete $n$-vertex $r$-graph to ensure the existence of a rainbow copy of $F$. 
We establish a removal-type result for the anti-Ramsey problem of $F$ when $F$ is the expansion of a hypergraph with a smaller uniformity.
We present two applications of this result. 
First, we refine the general bound $\mathrm{ar}(n,F) = \mathrm{ex}(n,F_{-}) + o(n^r)$ proved by Erd{\H o}s--Simonovits--S{\' o}s, where $F_{-}$ denotes the family of $r$-graphs obtained from $F$ by removing one edge. 
Second, we determine the exact value of $\mathrm{ar}(n,F)$ for large $n$ in cases where $F$ is the expansion of a specific class of graphs. This extends results of Erd{\H o}s--Simonovits--S{\' o}s on complete graphs 
to the realm of hypergraphs.

\medskip

\noindent\textbf{Keywords:} anti-Ramsey problem,  hypergraph Tur\'{a}n problem, expansion of hypergraphs, splitting hypergraphs, stability. 

\noindent\textbf{MSC2010:} 	05C65, 05C15, 05C35
\end{abstract}
\section{Introduction}\label{SEC:Introduction}
For a fixed integer $r\ge 2$, an $r$-graph $\mathcal{H}$ is a collection of $r$-subsets of some finite set $V$. We identify a hypergraph $\mathcal{H}$ with its edge set and use $V(\mathcal{H})$ to denote its vertex set.  
The \textbf{link} of a vertex $v\in V(\mathcal{H})$ in $\mathcal{H}$ is defined as 
\begin{align*}
    L_{\mathcal{H}}(v) := \left\{e\in \binom{V(\mathcal{H})}{r-1} \colon e\cup \{v\}\in \mathcal{H}\right\}. 
\end{align*}
We let $d_{\mathcal{H}}(v) := |L_{\mathcal{H}}(v)|$ denote the \textbf{degree} of $v$ in $\mathcal{H}$. 
Given a set $U \subseteq V(\mathcal{H})$, we use $\mathcal{H}[U]$ to denote the \textbf{induced subgraph} of $\mathcal{H}$ on $U$. 
While for $\ell\in \mathbb{N}$ and disjoint sets $U_1, \ldots, U_{\ell} \subseteq V(\mathcal{H})$, we use $\mathcal{H}[U_1, \ldots, U_{\ell}]$ to denote the \textbf{induced $\ell$-partite subgraph} of $\mathcal{H}$ with parts $U_1, \ldots, U_{\ell}$. 
In other words, $\mathcal{H}[U_1, \ldots, U_{\ell}]$ consists of all edges in $\mathcal{H}$ that contain at most one vertex from each $U_i$. 

The \textbf{anti-Ramsey number} $\mathrm{ar}(n,F)$ of an $r$-graph $F$ is the minimum number $m$ such that any surjective map $\chi \colon K_n^r \to [m]$ contains a rainbow copy of $F$, i.e. a copy of $F$ in which every edge receives a unique value under $\chi$. 
Given a family $\mathcal{F}$ of $r$-graphs, we say $\mathcal{H}$ is \textbf{$\mathcal{F}$-free}
if it does not contain any member of $\mathcal{F}$ as a subgraph.
The \textbf{Tur\'{a}n number} $\mathrm{ex}(n,\mathcal{F})$ of $\mathcal{F}$ is the maximum
number of edges in an $\mathcal{F}$-free $r$-graph on $n$ vertices.
The study of $\mathrm{ex}(n,\mathcal{F})$ and its variant has been a central topic in Extremal graph and hypergraph theory since the seminal work of Tur\'{a}n~\cite{T41}.
We refer the reader to  surveys~\cite{Fu91,Caen94,Sid95,Kee11} for more related results. 

In~\cite{ESS75}, Erd\H{o}s--Simonovits--S\'{o}s initiated the study of anti-Ramsey problems and proved various results for graphs and hypergraphs. 
In particular, they proved that for every $r$-graph with $r\ge 2$, 
\begin{align}\label{equ:ESS-hypergraph}
    \mathrm{ar}(n,F)
    \le \mathrm{ex}(n,F_{-}) + o(n^r), 
\end{align}
where $F_{-}$ denotes the family of $r$-graphs obtained from $F$ by removing exactly one edge. 
For complete graphs, they proved that for fixed $\ell \ge 2$ and sufficiently large $n$,  
\begin{align}\label{equ:ESS-complete-graph}
    \mathrm{ar}(n,K_{\ell+1})
    = \mathrm{ex}(n,K_{\ell}) + 2. 
\end{align}
Later, Montellano-Ballesteros and Neumann-Lara~\cite{MN02} refined their result by showing that~\eqref{equ:ESS-complete-graph} holds for all integers $n \ge \ell$. 

The study of the value of $\mathrm{ar}(n,F)$ for graph $F$ has received a lot of attention and there has been substantial progress since the work of Erd\H{o}s--Simonovits--S\'{o}s. 
Taking a more comprehensive perspective, Jiang~\cite{Jiang02} showed that $\mathrm{ar}(n,F) \le \mathrm{ex}(n,F_{-}) +O(n)$ for subdivided\footnote{Here, `subdivided' means every edge in $F$ is incidence to a vertex of degree two.} graphs $F$.  For further results on various classes of graphs, we refer the reader to a survey~\cite{FMO10} by Fujita--Magnant--Ozeki for more details. 
In contrast, the value of $\mathrm{ar}(n,F)$ is only known for a few classes of hypergraphs such as matchings (see e.g.~\cite{OY13,FK19,Jin21,GLP23}), linear paths and cycles (see e.g.~\cite{GLS20,TLY22,TLG23}), and the augmentation of certain linear trees (see e.g.~\cite{LS23a}). 
In this note, we contribute to hypergraph anti-Ramsey theory by refining~\eqref{equ:ESS-hypergraph} and extending~\eqref{equ:ESS-complete-graph} to hypergraph expansions. 
%
%

Let $r > k \ge 2$ be integers. 
The \textbf{expansion} $H_{F}^{r}$ of a $k$-graph $F$ is the $r$-graph obtained from $F$ by adding a set of $r-k$ new vertices to each edge,  ensuring that different edges receive disjoint sets (see Figure~\ref{fig:Path-Cycle-Expansion}).
The expansion $H_{\mathcal{F}}^{r}$ of a family $\mathcal{F}$ is simply the collection of expansions of all elements in $\mathcal{F}$. 
Expansions are important objects in Extremal set theory and Hypergraph Tur\'{a}n problems. 
Its was introduced by Mubayi in~\cite{M06} as a way to extend Tur\'{a}n's theorem to hypergraphs. 
There has been lots of progress in the study of expansion over the last few decades, and we refer the reader to the survey~\cite{MV15Survey} by Mubyai--Verstra\"{e}te for more related results.

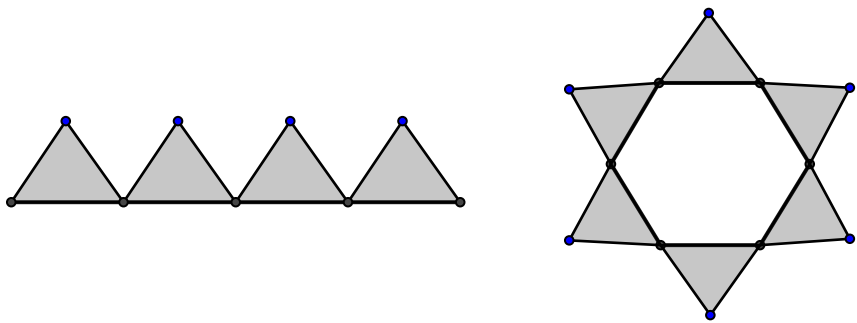
\begin{figure}[htbp]
\centering
\tikzset{every picture/.style={line width=0.75pt}} 
\begin{tikzpicture}[x=0.75pt,y=0.75pt,yscale=-1,xscale=1,scale=0.8]

\draw[line width=1.5pt,color=sqsqsq]  (44,163) -- (114,163) -- (184,163) -- (254,163) -- (324,163) ;
\draw[line width=1pt, fill=sqsqsq,fill opacity=0.25]   (78,112) -- (114,163) -- (44,163) -- cycle ;
\draw[line width=1pt, fill=sqsqsq,fill opacity=0.25]  (148,112) -- (184,163) -- (114,163) -- cycle ;
\draw[line width=1pt, fill=sqsqsq,fill opacity=0.25]   (218,112) -- (254,163) -- (184,163) -- cycle ;
\draw[line width=1pt, fill=sqsqsq,fill opacity=0.25]  (288,112) -- (324,163) -- (254,163) -- cycle ;
\draw [fill=uuuuuu] (44,163) circle (2pt);
\draw [fill=uuuuuu]  (114,163) circle (2pt);
\draw [fill=uuuuuu]  (184,163) circle (2pt);
\draw [fill=uuuuuu]  (254,163) circle (2pt);
\draw [fill=uuuuuu]  (324,163)  circle (2pt);
\draw [fill=blue]   (78,112)  circle (2pt);
\draw [fill=blue]   (148,112) circle (2pt);
\draw [fill=blue]  (218,112)  circle (2pt);
\draw [fill=blue]   (288,112)  circle (2pt);

\draw[line width=1.5pt,color=sqsqsq]  (542,139) -- (511,190) -- (449,190) -- (418,139) -- (448,88) -- (511,88) -- cycle ;

\draw [fill=uuuuuu] (542,139) circle (2pt);
\draw [fill=uuuuuu]  (511,190) circle (2pt);
\draw [fill=uuuuuu]  (449,190) circle (2pt);
\draw [fill=uuuuuu]  (418,139) circle (2pt);
\draw [fill=uuuuuu]  (448,88)  circle (2pt);
\draw [fill=uuuuuu] (511,88)  circle (2pt);

\draw[line width=1pt, fill=sqsqsq,fill opacity=0.25]   (479,44) -- (511,88) -- (448,88) -- cycle ;
\draw[line width=1pt, fill=sqsqsq,fill opacity=0.25]   (567,91) -- (542,139) -- (511,88) -- cycle ;
\draw[line width=1pt, fill=sqsqsq,fill opacity=0.25]   (480,234) -- (449,190) -- (511,190) -- cycle ;
\draw[line width=1pt, fill=sqsqsq,fill opacity=0.25]   (567,186) -- (511,190) -- (542,139) -- cycle ;
\draw[line width=1pt, fill=sqsqsq,fill opacity=0.25]   (392,92) -- (448,88) -- (418,139) -- cycle ;
\draw[line width=1pt, fill=sqsqsq,fill opacity=0.25]   (392,187) -- (418,139) -- (449,190) -- cycle ;
\draw [fill=blue]  (479,44)  circle (2pt);
\draw [fill=blue]   (567,91)  circle (2pt);
\draw [fill=blue]  (480,234)   circle (2pt);
\draw [fill=blue]   (567,186)  circle (2pt);
\draw [fill=blue]  (392,92)  circle (2pt);
\draw [fill=blue]   (392,187)  circle (2pt);

\end{tikzpicture}
\caption{Expansions of $P_4$ and $C_6$ for $r=3$.}
\label{fig:Path-Cycle-Expansion}
\end{figure}

\subsection{Expansion of hypergraphs}
In this subsection, we present a refinement of~\eqref{equ:ESS-hypergraph}. The following definitions will play a crucial role in our results.

Given a $k$-graph $F$, $k \ge 2$, and a vertex $u\in V(F)$, the \textbf{$u$-splitting} of $F$ is a $k$-graph, denoted by $F\vee u$, obtained from $F$ by 
\begin{itemize}
    \item removing the vertex $u$ and all edges containing $u$ from $F$, and then 
    \item adding a $d_{F}(u)$-set $\{v_e \colon e \in L_{F}(u)\}$ of new vertices to the vertex set and adding $\left\{e\cup \{v_e\} \colon e \in L_{F}(u)\right\}$ to the edge set. 
\end{itemize}
In other words, 
\begin{align*}
    F\vee u
    := \left\{e\cup \{v_e\} \colon e \in L_{F}(u)\right\} \cup \left(F-u\right), 
\end{align*}
where $\{v_e \colon e \in L_{F}(u)\}$ is a $d_{F}(u)$-set of new vertices outside $V(F)$. 

Given an independent set $I:= \{u_1, \ldots, u_{\ell}\}$ in $F$, the  \textbf{$I$-splitting}  of $F$, denoted by $F\vee I$, is defined by recursively letting $F_0 := F$ and $F_i:= F_{i-1}\vee u_i$ for $i\in [\ell]$ (see Figure~\ref{fig:splitting}). 
It is easy to see that since $I$ is an independent set, the ordering of vertices in $I$ does not affect the resulting $k$-graph. 
The \textbf{splitting family} $\mathrm{Split}(F)$ of $F$ is defined as  
\begin{align*}
    \mathrm{Split}(F)
    := \left\{\hat{F} \colon \text{there exists an independent set $I$ in 
$F$ such that $\hat{F} = F\vee I$}\right\}. 
\end{align*}
In the definition above, we allow $I$ to be empty. 
Hence, $F\in \mathrm{Split}(F)$. 
Note that $|\hat{F}| = |F|$ for all $\hat{F} \in \mathrm{Split}(F)$. 

Given a coloring $\chi\colon K_{n}^{r} \to \mathbb{N}$, we say a subgraph $\mathcal{H} \subseteq K_{n}^{r}$ is \textbf{rainbow} if no two edges in $\mathcal{H}$ have the same color. 
The coloring $\chi$ is \textbf{rainbow-$\mathcal{F}$-free} for some family $\mathcal{F}$ if any rainbow subgraph of $\chi$-colored $K_{n}^r$ is $\mathcal{F}$-free. 

The main result of this subsection is as follows. 
\begin{theorem}\label{THM:antiRamsey-expansion-hypergraph-splitting}
    Let $r > k \ge 2$ be integers and $F$ be a $k$-graph. The following holds for all sufficiently large $n$.
    Suppose that $\chi\colon K_{n}^{r} \to \mathbb{N}$ is a rainbow-$H_{F}^{r}$-free coloring. 
    Then every rainbow subgraph $\mathcal{H} \subseteq K_{n}^{r}$ can be made $H_{F_{-}}^{r}$-free by removing at most $\left(|F|-1\right)\cdot \mathrm{ex}\left(n, \mathrm{Split}(F)\right)$ edges. 
    In particular, 
    \begin{align*}
        \mathrm{ar}(n, H_{F}^{r})
        \le \mathrm{ex}(n, H_{F_{-}}^{r}) + \left(|F|-1\right)\cdot \mathrm{ex}\left(n, \mathrm{Split}(F)\right) +1. 
    \end{align*}
\end{theorem}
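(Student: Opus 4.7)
The ``in particular'' bound on $\mathrm{ar}(n, H_F^r)$ is a direct corollary of the removal statement. Given any surjective rainbow-$H_F^r$-free coloring $\chi \colon K_n^r \to [m]$, I pick one representative $r$-edge per color class to form a rainbow subgraph $\mathcal{H}$ with $|\mathcal{H}| = m$; applying the removal statement to $\mathcal{H}$ yields a subgraph of size at least $m - (|F|-1)\mathrm{ex}(n, \mathrm{Split}(F))$ that is $H_{F_-}^r$-free, and this is bounded above by $\mathrm{ex}(n, H_{F_-}^r)$. Rearranging gives $m \le \mathrm{ex}(n, H_{F_-}^r) + (|F|-1)\mathrm{ex}(n, \mathrm{Split}(F))$, hence $\mathrm{ar}(n, H_F^r) \le m + 1$ is the desired bound.

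For the removal statement, I argue by contradiction. Let $\mathcal{E}$ be a minimum edge-deletion set making $\mathcal{H}$ free of $H_{F_-}^r$, and suppose $|\mathcal{E}| > (|F|-1)\mathrm{ex}(n, \mathrm{Split}(F))$. Minimality ensures that for each $e \in \mathcal{E}$ there is a copy $H^-_e$ of $H_{F - e_0(e)}^r$ in $\mathcal{H}$ containing $e$ with $H^-_e \cap \mathcal{E} = \{e\}$; let $\psi_e \colon V(F) \hookrightarrow V(K_n^r)$ denote the associated core embedding and $f_e \in F \setminus \{e_0(e)\}$ the $k$-edge realized by $e$, so that $e = \psi_e(f_e) \cup S_e$ for some $(r-k)$-extension $S_e$. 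After an appropriate pigeonholing on the labels $(e_0(e), f_e)$, I extract a subset $\mathcal{E}^\ast \subseteq \mathcal{E}$ on which the labeling is uniform, of size exceeding $\mathrm{ex}(n, \mathrm{Split}(F))$. Next, define the auxiliary $k$-graph $\mathcal{G} := \{\psi_e(f_e) : e \in \mathcal{E}^\ast\}$ on $V(K_n^r)$; an injectivity check (using minimality of $\mathcal{E}$ and rainbow-ness of $\mathcal{H}$) gives $|\mathcal{G}| = |\mathcal{E}^\ast|$. It suffices to show that $\mathcal{G}$ is $\mathrm{Split}(F)$-free.

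The crux --- and main obstacle --- is proving $\mathrm{Split}(F)$-freeness of $\mathcal{G}$. Suppose for contradiction that some $\hat F = F \vee I \in \mathrm{Split}(F)$ embeds in $\mathcal{G}$. My plan is to construct a rainbow copy of $H_F^r$ in the $\chi$-colored $K_n^r$, contradicting the rainbow-$H_F^r$-freeness of $\chi$. The key insight is that the split vertices of $\hat F$, realized as distinct vertices of $V(K_n^r)$, can double as extension vertices of the desired $H_F^r$: for each $u \in I$ and each $f \in L_F(u)$, the split representative $v^{(f)}_u$ joins the $(r-k)$-extension set of the $f$-th edge, restoring the intersection pattern of $F$ at $u$ among the chosen $r$-edges. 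Each edge of $\hat F$ comes with a witness $H^-_e$ supplying a rainbow $r$-edge over the corresponding $k$-set (with its own color in $\mathcal{H}$); selecting one per edge of $\hat F$ gives $|F|$ rainbow-colored $r$-edges, and the remaining extension slots are filled with fresh vertices drawn from the $\Omega(n)$-sized untouched portion of $V(K_n^r)$. Verifying that the resulting assembly forms a genuine rainbow $H_F^r$ --- with correct intersection pattern, distinct colors, and no color-conflicts involving the fresh extensions --- is the delicate point, to be handled by an iterative substitution argument (replacing problematic $r$-edges with alternative completions in $K_n^r$) that leverages the rainbow-$H_F^r$-freeness of $\chi$ on all of $K_n^r$ together with the supersaturation of extension choices available when $n$ is sufficiently large.
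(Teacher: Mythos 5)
Your reduction of the $\mathrm{ar}$-bound to the removal statement is fine, and your picture that the split vertices of $\hat F=F\vee I$ ``double as extension vertices'' is exactly the right combinatorial idea for the embedding step. The gap is the crux you defer to an ``iterative substitution argument'': in your setup I do not see how rainbowness of the assembled copy can be forced. Your auxiliary $k$-graph consists of the cores $\psi_e(f_e)$ of edges that are \emph{present} in $\mathcal{H}$. To turn an embedded $\hat F$ into a copy of $H_F^r$ you must modify edges: the $d_F(u)$ edges meeting a split vertex have to be re-completed so that they share one new vertex playing the role of $u\in I$, and even the remaining edges need fresh pairwise disjoint $(r-k)$-extensions (the original sets $S_e$ may collide with each other or with other cores). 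Once an $r$-edge is re-completed it is no longer an edge of the rainbow graph $\mathcal{H}$, and nothing constrains its color: rainbow-$H_F^r$-freeness only controls the color of a completion of a $k$-set that is the \emph{missing} edge of an otherwise rainbow near-copy of $H_F^r$, not of a $k$-set underlying an edge you already have. This is precisely why the paper builds its auxiliary graph from the missing $k$-sets $e_i$ of witness copies, so that Claim~\ref{CLAIM:anti-Ramsey-splitting-e_i-cup-S} pins the color of \emph{every} completion $e_i\cup S$ inside $\chi(Q_i)$, and why it chooses the witnesses as a \emph{maximal family of pairwise edge-disjoint} copies of members of $H_{F_-}^r$, so that the color sets $\chi(Q_i)$ are pairwise disjoint by~\eqref{equ:anti-Ramsey-splitting-disjoint-color-set} and the assembled copy is rainbow automatically; the edge-disjoint packing also yields the removal bound at once, since deleting its at most $(|F|-1)m$ edges kills all copies. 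Your witnesses $H^-_e$, coming from a minimum deletion set, are not edge-disjoint (they may all share edges of $\mathcal{H}\setminus\mathcal{E}$), so even if you rebuilt the auxiliary graph from missing $k$-sets, the disjointness of color sets --- and hence the rainbow conclusion --- would not follow.

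Two secondary problems: the pigeonholing over the labels $(e_0(e),f_e)$ has up to $|F|(|F|-1)$ classes, so from $|\mathcal{E}|>(|F|-1)\cdot\mathrm{ex}(n,\mathrm{Split}(F))$ you cannot extract a uniform class of size exceeding $\mathrm{ex}(n,\mathrm{Split}(F))$, and the constant in your theorem degrades; and the asserted injectivity $|\mathcal{G}|=|\mathcal{E}^\ast|$ is unjustified, since two deleted edges can share the same core and differ only in their $(r-k)$-extensions, which neither minimality of $\mathcal{E}$ nor rainbowness of $\mathcal{H}$ visibly excludes (the paper's analogous Claim~\ref{CLAIM:anti-Ramsey-splitting-no-multiset} again relies on the color-control and disjointness facts your setup lacks). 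These quantitative points might be repairable, but the missing color-control mechanism described above is the essential gap.
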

Since $\mathrm{Split}(F)$ is a family of $k$-graphs (and $k \le r-1$), we have $\mathrm{ar}(n, H_{F}^{r}) \le \mathrm{ex}(n, H_{F_{-}}^{r}) + O(n^k)$, which improves the bound given by~\eqref{equ:ESS-hypergraph} for large $n$. 

Observe that if the graph $F$ is obtained from a bipartite graph by adding a forest to one part, then the family $\mathrm{Split}(F)$ contains a forest (obtained by splitting the other part in $F$). 
Therefore,  we have the following corollary. 
\begin{corollary}\label{CORO:antiRamsey-expansion-hypergraph-splittingk=2}
    Let $r\ge 3$ be integers and $F$ be a graph obtained from a bipartite graph by adding a forest to one part.  
    There exists a constant $C_F$ depending only on $F$ such that the following holds for every $n \in \mathbb{N}$. 
    Suppose that $\chi\colon K_{n}^{r} \to \mathbb{N}$ is a rainbow-$H_{F}^{r}$-free coloring. 
    Then every rainbow subgraph $\mathcal{H} \subseteq K_{n}^{r}$ can be made $H_{F_{-}}^{r}$-free by removing at most $C_F \cdot n$ edges. 
    In particular, 
    \begin{align*}
        \mathrm{ar}(n, H_{F}^{r})
        \le \mathrm{ex}(n, H_{F_{-}}^{r}) + C_F \cdot n. 
    \end{align*}
\end{corollary}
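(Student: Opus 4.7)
The plan is to deduce this corollary directly from Theorem~\ref{THM:antiRamsey-expansion-hypergraph-splitting} by showing that the hypothesis on $F$ forces $\mathrm{Split}(F)$ to contain a forest, which makes $\mathrm{ex}(n,\mathrm{Split}(F)) = O(n)$. Concretely, suppose $V(F) = A \sqcup B$ where $B$ is an independent set (this realizes the ``bipartite'' underlying structure) and $F[A]$ is a forest $T$; so the edges of $F$ split into the edges of $T$ inside $A$ and the bipartite edges between $A$ and $B$.

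The first step is to verify the parenthetical observation from the text: that $F \vee B$ is itself a forest. Since $B$ is an independent set in $F$, $F \vee B$ is well-defined and belongs to $\mathrm{Split}(F)$. Unwinding the definition, $F \vee B$ is obtained from $F$ by deleting every vertex $u \in B$ and replacing each edge $\{a,u\} \in F$ (with $a \in A$) by a pendant edge $\{a, v_{\{a\}}\}$ ending in a brand-new vertex; the edges of the forest $T$ on $A$ are unchanged. Thus $F \vee B$ consists of the forest $T$ on $A$ together with a collection of pendant edges, each of which introduces a fresh leaf vertex not previously in the graph. Since adding a pendant edge to a forest yields a forest, $F \vee B$ is a forest.

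The second step is to bound $\mathrm{ex}(n, \mathrm{Split}(F))$. Let $t := |V(F \vee B)|$. By the standard fact that the Tur\'{a}n number of any forest on $t$ vertices is at most $(t-1)\,n$ (e.g.\ by iteratively deleting a vertex of degree less than $t-1$ and applying induction), we obtain
\begin{align*}
    \mathrm{ex}\bigl(n, \mathrm{Split}(F)\bigr) \;\le\; \mathrm{ex}(n, F \vee B) \;\le\; (t-1)\, n.
\end{align*}
Combining this with Theorem~\ref{THM:antiRamsey-expansion-hypergraph-splitting} and setting $C_F' := (|F|-1)(t-1) + 1$ yields the stated removal statement and the desired inequality $\mathrm{ar}(n, H_F^{r}) \le \mathrm{ex}(n, H_{F_{-}}^{r}) + C_F' \cdot n$ for all sufficiently large $n$.

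Finally, to upgrade to \emph{every} $n \in \mathbb{N}$, I would absorb the finitely many small cases into the constant: since $\mathrm{ar}(n, H_F^{r}) \le \binom{n}{r}$ trivially, choosing $C_F$ larger than $C_F'$ and larger than $\binom{n_0}{r}/n$ over the finite range $n < n_0$ (where $n_0$ is the threshold from Theorem~\ref{THM:antiRamsey-expansion-hypergraph-splitting}) gives the bound uniformly. The only genuinely substantive step is the structural verification that $F \vee B$ is a forest; everything else is routine once that is in place, so I expect that to be the key (but short) conceptual point.
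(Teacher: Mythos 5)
Your proposal is correct and follows essentially the same route as the paper: split the part of the bipartition that did not receive the forest to see that $\mathrm{Split}(F)$ contains a forest, bound $\mathrm{ex}(n,\mathrm{Split}(F))$ linearly in $n$, and plug this into Theorem~\ref{THM:antiRamsey-expansion-hypergraph-splitting}, absorbing the finitely many small values of $n$ into the constant. The verification that $F \vee B$ is a forest (pendant edges with fresh leaves attached to the forest on the other part) is exactly the observation the paper relies on.
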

Notice that if a $k$-graph $F$ is $p$-partite, then $\mathrm{Split}(F)$ contains a $(p-1)$-partite $k$-graph (obtained by splitting an arbitrary part in $F$). 
Recall the (hypergraph) K\"{o}vari--S\'{o}s--Tur\'{a}n theorem~\cite{KST54,E64}, we obtain the following corollary.  
\begin{corollary}\label{CORO:antiRamsey-expansion-hypergraph-k+1-partite}
    Let $r> k \ge 2$ be integers and $F$ be a $(k+1)$-partite $k$-graph. 
    There exists constants $C_F, \alpha_F >0$ depending only on $F$ such that the following holds for every $n \in \mathbb{N}$. 
    Suppose that $\chi\colon K_{n}^{r} \to \mathbb{N}$ is a rainbow-$H_{F}^{r}$-free coloring. 
    Then every rainbow subgraph $\mathcal{H} \subseteq K_{n}^{r}$ can be made $H_{F_{-}}^{r}$-free by removing at most $C_F \cdot n^{k-\alpha_F}$ edges. 
    In particular, 
    \begin{align*}
        \mathrm{ar}(n,H_{F}^{r}) 
            \le \mathrm{ex}(n,H_{F_{-}}^{r}) + C_{F}\cdot n^{k-\alpha_F}.  
    \end{align*}
\end{corollary}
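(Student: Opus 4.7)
The plan is to apply Theorem~\ref{THM:antiRamsey-expansion-hypergraph-splitting} and estimate $\mathrm{ex}(n,\mathrm{Split}(F))$ by exhibiting a single element of $\mathrm{Split}(F)$ that is $k$-partite, and then invoking the hypergraph K\H{o}v\'{a}ri--S\'{o}s--Tur\'{a}n theorem of Erd\H{o}s~\cite{E64}. Since Theorem~\ref{THM:antiRamsey-expansion-hypergraph-splitting} already yields the stability statement and the bound
\begin{align*}
    \mathrm{ar}(n,H_{F}^{r})
    \le \mathrm{ex}(n,H_{F_{-}}^{r}) + (|F|-1)\cdot \mathrm{ex}(n,\mathrm{Split}(F)) + 1,
\end{align*}
it suffices to produce a $k$-partite $k$-graph $\hat{F}\in\mathrm{Split}(F)$.

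Let $V_1,\ldots,V_{k+1}$ be the $k+1$ parts of $F$ and set $I := V_1$, which is an independent set. First I would verify that $\hat{F} := F\vee V_1$ is $k$-partite with parts $V_2,\ldots,V_{k+1}$ (after suitable reassignment of the new vertices). The edges of $\hat{F}$ come in two types: edges $e$ of $F$ missing $V_1$, which already sit inside $V_2\cup\cdots\cup V_{k+1}$ and use all $k$ of these parts; and edges of the form $(e\setminus\{u\})\cup\{v_e\}$, where $u\in V_1$ and $e\in F$ contains $u$. Since $F$ is $(k+1)$-partite, such an $e$ uses exactly one vertex from each of $V_1$ and $k-1$ of the remaining parts, missing exactly one of $V_2,\ldots,V_{k+1}$, say $V_{j(e)}$. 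Placing the new degree-one vertex $v_e$ into $V_{j(e)}$ gives a valid $k$-partition of $\hat{F}$, since $v_e$ appears in only the single edge associated with it.

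With $\hat{F}$ in hand, the hypergraph K\H{o}v\'{a}ri--S\'{o}s--Tur\'{a}n theorem yields constants $C', \alpha_F > 0$ depending only on $F$ (equivalently, on $\hat{F}$) such that $\mathrm{ex}(n,\hat{F}) \le C'\cdot n^{k-\alpha_F}$. Since $\hat{F}\in\mathrm{Split}(F)$, we have $\mathrm{ex}(n,\mathrm{Split}(F)) \le \mathrm{ex}(n,\hat{F})$, and substituting this into the bound from Theorem~\ref{THM:antiRamsey-expansion-hypergraph-splitting} gives the desired inequality with $C_F := (|F|-1)\cdot C' + 1$ (absorbing the additive $1$). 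The same substitution carries through to the stability conclusion, so every rainbow subgraph can be made $H_{F_-}^r$-free by removing at most $C_F\cdot n^{k-\alpha_F}$ edges.

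The only nontrivial step is the partition bookkeeping in the second paragraph, but this is a direct consequence of the arithmetic $(k+1)-1 = k$: in a $(k+1)$-partite $k$-graph, each edge that uses $V_1$ misses exactly one other part, giving a canonical home for the replacement vertex $v_e$. I do not expect any obstacles beyond this observation; everything else is an application of tools already stated or cited in the paper.
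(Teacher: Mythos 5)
Your proposal is correct and follows exactly the route the paper takes: the paper's entire justification is the two-sentence remark preceding the corollary, observing that a $(k+1)$-partite $k$-graph has a $k$-partite member in its splitting family, and invoking the hypergraph K\H{o}v\'{a}ri--S\'{o}s--Tur\'{a}n theorem. The partition bookkeeping you spell out (each edge through $V_1$ misses exactly one of $V_2,\ldots,V_{k+1}$, and since each new vertex $v_e$ has degree one it can be freely placed there) is precisely the content compressed into the paper's parenthetical ``obtained by splitting an arbitrary part in $F$.'' One small point worth noting: Theorem~\ref{THM:antiRamsey-expansion-hypergraph-splitting} only holds for sufficiently large $n$, whereas the corollary is stated for every $n\in\mathbb{N}$; the standard fix is to enlarge $C_F$ so that $C_F\cdot n^{k-\alpha_F}\ge\binom{n}{r}$ for the finitely many small $n$, which makes the stability conclusion vacuous there. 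Your constant adjustment $C_F := (|F|-1)C'+1$ handles the additive $+1$ but not this finite-range issue; both are trivial to repair.
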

Theorem~\ref{THM:antiRamsey-expansion-hypergraph-splitting} will be proved in Section~\ref{SEC:proof-antiRamsey-expansion-hypergraph-splitting}. 

\subsection{Expansion of graphs}
In this subsection, we present an extension of~\eqref{equ:ESS-complete-graph} to the expansion of graphs. 
For convenience, we let $H_{\ell}^{r} := H_{K_{\ell}}^{r}$.  

Let $t$ be a positive integer. 
We use $F[t]$ to denote the \textbf{$t$-blowup} of $F$, i.e. $F[t]$ is the $r$-graph obtained from $F$ by replacing vertices with disjoint $t$-sets, and replacing each edge in $F$ by the corresponding complete $r$-partite $r$-graph. 
Given a family $\mathcal{F}$ of $r$-graphs, we let 
\begin{align*}
    \mathcal{F}[t] := \left\{F[t] \colon F\in \mathcal{F}\right\}. 
\end{align*}
Let $\ell \ge 2$ and $t \ge 4$ be integers. 
\begin{itemize}
    \item Let $K_{\ell}^{\alpha}[t]$ denote the graph obtained from the blowup $K_{\ell}[t]$  by adding a path of length two into one part (see Figure~\ref{fig:edge-critical}~(a)).
    \item Let $K_{\ell}^{\beta}[t]$ denote the graph obtained from the blowup $K_{\ell}[t]$  by adding two disjoint edges into one part (see Figure~\ref{fig:edge-critical}~(b)). 
    \item Let $K_{\ell}^{\gamma}[t]$ denote the graph obtained from the blowup $K_{\ell}[t]$ by adding two edges into two different parts (see Figure~\ref{fig:edge-critical}~(c)). 
\end{itemize}
The motivation for these definitions is that if $F$ is obtained from an edge-critical graph by adding one edge, then $F$ can be found within one of the previously defined graphs for sufficiently large $t$. 

\begin{figure}[htbp]
\centering
\tikzset{every picture/.style={line width=1pt}} 

\begin{tikzpicture}[x=0.75pt,y=0.75pt,yscale=-1,xscale=1,scale=0.9]

\draw   (65,164.82) .. controls (65,149.26) and (77.58,136.64) .. (93.09,136.64) .. controls (108.61,136.64) and (121.19,149.26) .. (121.19,164.82) .. controls (121.19,180.38) and (108.61,193) .. (93.09,193) .. controls (77.58,193) and (65,180.38) .. (65,164.82) -- cycle ;
\draw   (154.15,164.82) .. controls (154.15,149.26) and (166.73,136.64) .. (182.24,136.64) .. controls (197.76,136.64) and (210.33,149.26) .. (210.33,164.82) .. controls (210.33,180.38) and (197.76,193) .. (182.24,193) .. controls (166.73,193) and (154.15,180.38) .. (154.15,164.82) -- cycle ;
\draw   (109.57,88.18) .. controls (109.57,72.62) and (122.15,60) .. (137.67,60) .. controls (153.18,60) and (165.76,72.62) .. (165.76,88.18) .. controls (165.76,103.74) and (153.18,116.36) .. (137.67,116.36) .. controls (122.15,116.36) and (109.57,103.74) .. (109.57,88.18) -- cycle ;
\draw [color=blue]  (126.31,83.27) -- (145.53,90.73) ;
\draw  [color=blue]  (126.31,83.27) -- (145.13,76.73) ;
\draw   (82.99,150.81) -- (122.46,93.76) -- (88.92,154.1) -- (128.39,97.06) -- (94.84,157.4) -- (134.31,100.35) -- (100.77,160.69) ;
\draw   (157.54,94.06) -- (189.25,153.36) -- (151.92,97.85) -- (183.63,157.15) -- (146.29,101.63) -- (178,160.93) -- (140.67,105.41) ;
\draw   (172.33,179.15) -- (105.32,178.95) -- (171.67,172.41) -- (104.66,172.2) -- (171,165.66) -- (103.99,165.46) -- (170.34,158.92) ;
\draw   (255,164.82) .. controls (255,149.26) and (267.58,136.64) .. (283.09,136.64) .. controls (298.61,136.64) and (311.19,149.26) .. (311.19,164.82) .. controls (311.19,180.38) and (298.61,193) .. (283.09,193) .. controls (267.58,193) and (255,180.38) .. (255,164.82) -- cycle ;
\draw   (344.15,164.82) .. controls (344.15,149.26) and (356.73,136.64) .. (372.24,136.64) .. controls (387.76,136.64) and (400.33,149.26) .. (400.33,164.82) .. controls (400.33,180.38) and (387.76,193) .. (372.24,193) .. controls (356.73,193) and (344.15,180.38) .. (344.15,164.82) -- cycle ;
\draw   (299.57,88.18) .. controls (299.57,72.62) and (312.15,60) .. (327.67,60) .. controls (343.18,60) and (355.76,72.62) .. (355.76,88.18) .. controls (355.76,103.74) and (343.18,116.36) .. (327.67,116.36) .. controls (312.15,116.36) and (299.57,103.74) .. (299.57,88.18) -- cycle ;
\draw [color=blue]  (317,73) -- (338,73) ;
\draw  [color=blue]  (317,82.18) -- (338,82.18) ;
\draw   (272.99,150.81) -- (312.46,93.76) -- (278.92,154.1) -- (318.39,97.06) -- (284.84,157.4) -- (324.31,100.35) -- (290.77,160.69) ;
\draw   (347.54,94.06) -- (379.25,153.36) -- (341.92,97.85) -- (373.63,157.15) -- (336.29,101.63) -- (368,160.93) -- (330.67,105.41) ;
\draw   (362.33,179.15) -- (295.32,178.95) -- (361.67,172.41) -- (294.66,172.2) -- (361,165.66) -- (293.99,165.46) -- (360.34,158.92) ;
\draw   (443,165.82) .. controls (443,150.26) and (455.58,137.64) .. (471.09,137.64) .. controls (486.61,137.64) and (499.19,150.26) .. (499.19,165.82) .. controls (499.19,181.38) and (486.61,194) .. (471.09,194) .. controls (455.58,194) and (443,181.38) .. (443,165.82) -- cycle ;
\draw   (532.15,165.82) .. controls (532.15,150.26) and (544.73,137.64) .. (560.24,137.64) .. controls (575.76,137.64) and (588.33,150.26) .. (588.33,165.82) .. controls (588.33,181.38) and (575.76,194) .. (560.24,194) .. controls (544.73,194) and (532.15,181.38) .. (532.15,165.82) -- cycle ;
\draw   (487.57,89.18) .. controls (487.57,73.62) and (500.15,61) .. (515.67,61) .. controls (531.18,61) and (543.76,73.62) .. (543.76,89.18) .. controls (543.76,104.74) and (531.18,117.36) .. (515.67,117.36) .. controls (500.15,117.36) and (487.57,104.74) .. (487.57,89.18) -- cycle ;
\draw   (460.99,151.81) -- (500.46,94.76) -- (466.92,155.1) -- (506.39,98.06) -- (472.84,158.4) -- (512.31,101.35) -- (478.77,161.69) ;
\draw   (535.54,95.06) -- (567.25,154.36) -- (529.92,98.85) -- (561.63,158.15) -- (524.29,102.63) -- (556,161.93) -- (518.67,106.41) ;
\draw   (550.33,180.15) -- (483.32,179.95) -- (549.67,173.41) -- (482.66,173.2) -- (549,166.66) -- (481.99,166.46) -- (548.34,159.92) ;
\draw  [color=blue]  (505.33,81) -- (526.33,81) ;
\draw  [color=blue]  (458.33,166) -- (472.33,180) ;
%
\draw (125,210) node [anchor=north west][inner sep=0.75pt]   [align=left] {(a)};
\draw (320,210) node [anchor=north west][inner sep=0.75pt]   [align=left] {(b)};
\draw (510,210) node [anchor=north west][inner sep=0.75pt]   [align=left] {(c)};
\end{tikzpicture}
\caption{$K_{3}^{\alpha}[t]$, $K_{3}^{\beta}[t]$, and $K_{3}^{\gamma}[t]$.}
\label{fig:edge-critical}
\end{figure}
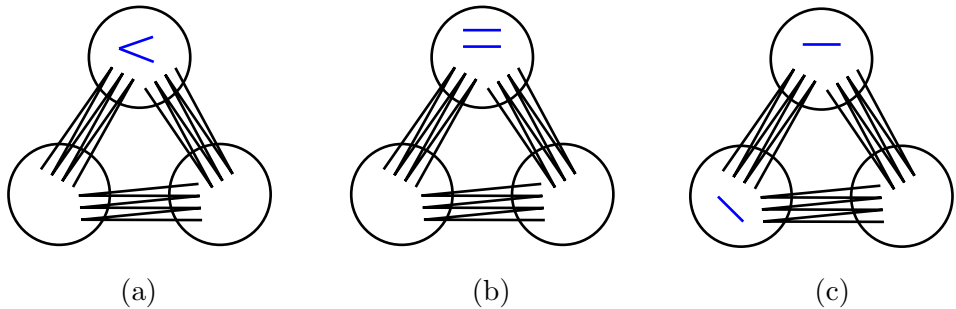

Given integers $n \ge r \ge 2$.
Let $V_1 \sqcup \cdots \sqcup V_{\ell} = [n]$ be a partition such that $|V_{\ell}|+1\ge |V_1| \ge |V_2| \ge \cdots \ge |V_{\ell}|$. 
Let $T_{r}(n,\ell)$ denote the $r$-graph whose edge set consists of all $r$-subsets of $[n]$ that contain at most one vertex from each $V_i$.
Let $t_{r}(n,\ell)$ denote the number of edges in $T_{r}(n,\ell)$ and notice that $t_{r}(n,\ell) \sim \binom{\ell}{r}\left(\frac{n}{\ell}\right)^r$. 
Extending the classical Tur{\' a}n Theorem to hypergraphs, Mubayi proved in~\cite{M06} that $\mathrm{ex}(n,H_{\ell+1}^{r}) = (1+o(1))t_{r}(n,\ell)$ for all $\ell \ge r \ge 3$. 
Building on a stability theorem of Mubayi, Pikhurko~\cite{Pik13} later proved that for fixed $\ell \ge r \ge 3$,  $\mathrm{ex}(n,H_{\ell+1}^{r}) = t_{r}(n,\ell)$ holds for all sufficiently large $n$.

The main results in this subsection are as follows. 
\begin{theorem}\label{THM:antiRamsey-expansion-edge-criticalr=3}
    Let $t > \ell \ge 3$ be fixed integers. For all sufficiently large $n$, 
    \begin{align*}
        \mathrm{ar}(n,H_{F}^{3}) 
            = 
            \begin{cases}
                t_{3}(n,\ell) + \ell+1, & \quad\text{if}\quad F \in \left\{K_{\ell}^{\alpha}[t],\ K_{\ell}^{\beta}[t]\right\}, \\
                t_{3}(n,\ell) + 2, & \quad\text{if}\quad F = K_{\ell}^{\gamma}[t].
            \end{cases}
    \end{align*}
\end{theorem}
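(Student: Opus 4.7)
The plan is to establish matching lower and upper bounds. For the lower bound, I will construct explicit rainbow-$H_F^3$-free colorings of $K_n^3$. Begin by assigning distinct colors to every edge of $T_3(n,\ell)$, using $t_3(n,\ell)$ colors, and then color the remaining triples---those with at least two vertices in a single part $V_i$ of the Tur\'an partition---as follows. For $F \in \{K_\ell^\alpha[t], K_\ell^\beta[t]\}$, introduce $\ell$ additional colors $c_1,\dots,c_\ell$ and assign each such triple the color $c_i$ corresponding to its \emph{majority} part $V_i$, for a total of $t_3(n,\ell) + \ell$ colors; for $F = K_\ell^\gamma[t]$, use a single additional color for all such triples, giving $t_3(n,\ell) + 1$ colors. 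To verify rainbow-$H_F^3$-freeness, I would argue that the rigidity of $K_\ell[t]$ for $t$ large forces any embedding of $F$ into $K_n^3$ to map each part of $F$ essentially into one part of the Tur\'an partition; then the two extra edges of $F$---which lie in a common part of $F$ for $\alpha, \beta$ or in distinct parts of $F$ for $\gamma$---produce two within-part hyperedges sharing a common color.

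For the upper bound, I will suppose $\chi$ is a surjective rainbow-$H_F^3$-free coloring with $N$ colors exceeding the target, and aim for a contradiction. Pick a rainbow subgraph $\mathcal{H}$ of size $N$ maximizing $|\mathcal{H} \cap T_3(n,\ell)|$. Since every graph in $F_-$ contains $K_{\ell+1}$ for $t \geq 2$, Pikhurko's exact Tur\'an theorem and its stability companion imply that every $H_{F_-}^3$-free $3$-graph on $n$ vertices ($n$ large) has at most $t_3(n,\ell)$ edges, with equality only for $T_3(n,\ell)$, and near-extremal examples are structurally close to $T_3(n,\ell)$. Invoking Theorem~\ref{THM:antiRamsey-expansion-hypergraph-splitting} together with Corollary~\ref{CORO:antiRamsey-expansion-hypergraph-k+1-partite} (which applies because $F$ is $(\ell+1)$-partite, so $\mathrm{Split}(F)$ contains an $\ell$-partite graph), I can remove $O(n^{2-\alpha})$ edges from $\mathcal{H}$ to reach an $H_{F_-}^3$-free subgraph; combined with stability and the maximality of $|\mathcal{H} \cap T_3(n,\ell)|$, this reduces us to the clean structure $\mathcal{H} = T_3(n,\ell) \cup E_{\mathrm{ex}}$, where $E_{\mathrm{ex}}$ is a set of exceptional within-part triples of size $|E_{\mathrm{ex}}| = N - t_3(n,\ell)$.

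Partition $E_{\mathrm{ex}} = E_1 \sqcup \cdots \sqcup E_\ell$ by majority part. For $F = K_\ell^\gamma[t]$, any pair of exceptional edges $h \in E_i$, $h' \in E_j$ with $i \neq j$ will permit a greedy embedding of $H_F^3$ into $\mathcal{H}$: map the two extra within-part edges of $F$ onto the base edges of $h, h'$ respectively, and embed the remaining blowup $K_\ell[t]$-portion into $T_3(n,\ell)$ by sequentially selecting base vertices in unused positions of each Tur\'an part and expansion vertices in third parts. Since $\mathcal{H}$ is rainbow, this gives a rainbow $H_F^3$, a contradiction; hence $E_{\mathrm{ex}}$ concentrates in a single part, so $|E_{\mathrm{ex}}| \leq 1$ and $N \leq t_3(n,\ell)+1$, contradicting $N \geq t_3(n,\ell)+2$. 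For $F \in \{K_\ell^\alpha[t], K_\ell^\beta[t]\}$, two exceptional edges in a common part $E_i$ with the correct incidence structure ($\alpha$: sharing a vertex; $\beta$: base-disjoint) admit an analogous greedy embedding, and a pigeonhole swap argument (exchanging representatives within color classes to realize the required incidence) shows that $|E_i|\geq 2$ always produces such a pair, forcing $|E_i|\leq 1$ for every $i$, hence $|E_{\mathrm{ex}}|\leq \ell$ and $N \leq t_3(n,\ell)+\ell$, contradicting $N \geq t_3(n,\ell)+\ell+1$.

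The main obstacle will be the greedy embedding step: given two prescribed within-part hyperedges of $\mathcal{H}$, one must complete them with the remaining $\binom{\ell}{2}t^2$ blowup hyperedges of $H_F^3$ entirely inside $\mathcal{H}$. Since these must come from $T_3(n,\ell)$, the plan is to sequentially assign the remaining $\ell t - O(1)$ base vertices to Tur\'an parts matching the parts of $F$, then choose the $\binom{\ell}{2}t^2$ expansion vertices in third parts disjoint from all previous choices. This should be feasible for large $n$ because $\ell, t$ are fixed while each Tur\'an part has $\Theta(n)$ vertices, but the delicate point will be verifying that the sequential choices can always be completed without vertex collisions, especially when the two prescribed within-part edges impose their own constraints on later assignments, and that the pigeonhole swap in the $\alpha, \beta$ cases genuinely reaches the required incidence pattern without destroying the rainbow property.
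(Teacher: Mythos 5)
Your lower bound constructions coincide with the paper's, and the overall three-phase outline (construction, reduction to approximate extremal structure, ruling out too many exceptional edges via embedding) is the right shape, but the upper-bound argument has a genuine gap at the reduction step. You claim that after invoking removal/stability and maximizing $|\mathcal{H} \cap T_3(n,\ell)|$, one reaches the "clean structure $\mathcal{H} = T_3(n,\ell) \cup E_{\mathrm{ex}}$." This does not follow. Stability only certifies that a near-extremal $\mathcal{H}$ agrees with some $\ell$-partite structure up to $o(n^3)$ (or even $O(n^{2-\alpha})$) edges; it never forces every edge of $T_3(n,\ell)$ to be present in $\mathcal{H}$. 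Indeed, a color class can cover thousands of Tur\'an edges, in which case $\mathcal{H} \cap T_3(n,\ell)$ is necessarily a strict (possibly very sparse) subgraph of $T_3(n,\ell)$, no matter how you choose representatives. The entire difficulty of the theorem is that the Tur\'an-like part of $\mathcal{H}$ is \emph{almost} but not exactly $T_3(n,\ell)$, and one must embed $H_F^3$ while steering around the missing crossing triples. The paper handles this by partitioning vertices into ``good'' sets $D_i$ (small defect) and ``bad'' vertices $\overline{D}$, charging the missing edges $\mathcal{M}$ to $\overline{D}$, and invoking a dedicated greedy embedding lemma (Lemma~\ref{LEMMA:greedily-embedding-Gi}) that finds a fully-present blowup of $K_\ell$ inside the good part despite an $O(\delta n^3)$ defect. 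Your greedy-embedding plan assumes the Tur\'an part is complete, which is exactly what cannot be assumed.

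Two secondary issues. First, your invocation of Corollary~\ref{CORO:antiRamsey-expansion-hypergraph-k+1-partite} to get an $O(n^{2-\alpha})$ removal bound does not apply: that corollary requires $F$ to be a $(k+1)$-partite $k$-graph, i.e.\ a $3$-partite graph here, but $K_\ell^\alpha[t]$, $K_\ell^\beta[t]$, $K_\ell^\gamma[t]$ have chromatic number $\ell+1 \ge 4$ and are therefore not $3$-partite; moreover $\mathrm{Split}(F)$ need not contain a bipartite graph once $\ell \ge 3$, so even Theorem~\ref{THM:antiRamsey-expansion-hypergraph-splitting} only yields an $O(n^2)$ bound, and in fact the paper's proof of this theorem does not route through Theorem~\ref{THM:antiRamsey-expansion-hypergraph-splitting} at all but uses the Erd\H{o}s--Simonovits--S\'os removal observation (Lemma~\ref{LEMMA:ESS-removal}) giving $o(n^3)$, then stability. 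Second, your ``pigeonhole swap'' idea for the $\alpha,\beta$ cases is too vague to assess; the paper instead resolves the incidence-pattern issue by choosing an auxiliary within-part triple $f$ whose color disagrees with at least one of $e,e'$ and using the pair $\{e,f\}$ or $\{e',f\}$ to anchor the embedding, which cleanly sidesteps any swapping.
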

The situation for $r \ge 4$ is simpler. 
\begin{theorem}\label{THM:antiRamsey-expansion-edge-criticalr>3}
    Let $\ell \ge r \ge 4$ and $t \ge 4$ be fixed integers. 
    For all sufficiently large $n$, 
    \begin{align*}
        \mathrm{ar}(n,H_{F}^{r}) 
            = t_{r}(n,\ell) + 2
            \quad\text{for all}\quad 
            F\in \left\{K_{\ell}^{\alpha}[t],\ K_{\ell}^{\beta}[t],\ K_{\ell}^{\gamma}[t]\right\}. 
    \end{align*}
\end{theorem}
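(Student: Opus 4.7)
The plan is to establish the matching lower and upper bounds $\mathrm{ar}(n, H_F^r) \ge t_r(n,\ell)+2$ and $\mathrm{ar}(n, H_F^r) \le t_r(n,\ell)+2$ separately for each $F \in \{K_\ell^\alpha[t], K_\ell^\beta[t], K_\ell^\gamma[t]\}$.

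\textbf{Lower bound.} I would exhibit an explicit rainbow-$H_F^r$-free coloring with $t_r(n,\ell)+1$ colors. Fix a balanced $\ell$-partition $[n] = V_1 \sqcup \cdots \sqcup V_\ell$, assign pairwise distinct colors to the $t_r(n,\ell)$ edges of $T_r(n,\ell)$, and give every non-transversal edge of $K_n^r$ a single extra color. Each rainbow subgraph is contained in $T_r(n,\ell) \cup \{e_0\}$ for some single non-transversal edge $e_0$. If there were an embedding $\phi\colon V(F) \to [n]$ of $H_F^r$ into this, every edge of $F$ whose image hyperedge is transversal would have its two endpoints in distinct parts; since at most one image hyperedge (namely $e_0$) is non-transversal, $F$ minus one edge would have to be $\ell$-colorable. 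A direct chromatic check, done case by case, shows that for each of the three graphs $F$ and every edge $e \in F$, the graph $F - e$ still has $\chi(F-e) = \ell+1$, giving the required contradiction.

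\textbf{Upper bound.} Suppose for contradiction that $\chi\colon K_n^r \to [M]$ is rainbow-$H_F^r$-free with $M = t_r(n,\ell)+2$, and fix a rainbow subgraph $\mathcal{H}$ with $|\mathcal{H}| = M$. The first step applies Theorem~\ref{THM:antiRamsey-expansion-hypergraph-splitting} to trim $\mathcal{H}$ into an $H_{F_-}^r$-free subgraph $\mathcal{H}'$ by deleting at most $(|F|-1)\,\mathrm{ex}(n, \mathrm{Split}(F)) = o(n^r)$ edges. Combined with the Mubayi--Pikhurko Turán bound $\mathrm{ex}(n, H_{F_-}^r) = t_r(n,\ell)$ for large $n$ (which is available because at least one member of $F_-$ is an edge-critical graph of chromatic number $\ell+1$, namely $K_\ell[t]$ plus one extra edge) together with its stability counterpart, I would extract a partition $V_1 \sqcup \cdots \sqcup V_\ell = [n]$ under which $\mathcal{H}$ is $o(n^r)$-close to $T_r(n,\ell)$, and then sharpen this via a standard vertex-clean-up argument to the statement that $\mathcal{H}$ is obtained from $T_r(n,\ell)$ by deleting some $s$ transversal edges and inserting $s+2$ non-transversal ones.

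The final step constructs a rainbow copy of $H_F^r$ and derives the contradiction. Since $|\mathcal{H}| > \mathrm{ex}(n, H_{F_-}^r)$, the subgraph $\mathcal{H}$ contains a copy $\mathcal{C}$ of some member of $H_{F_-}^r$; the missing edge of this copy corresponds to a pair $\{\phi(u^*), \phi(v^*)\}$ that must be realized by some hyperedge $e^* \not\in \mathcal{C}$, and it suffices to find such an $e^*$ whose $\chi$-color avoids the $|F|-1$ colors already used in $\mathcal{C}$. There are $\Theta(n^{r-2})$ hyperedges through any fixed pair, while the $|F|-1$ forbidden colors are realized on only a bounded number of hyperedges each (using that every color class is a proper subgraph of $K_n^r$), so a pigeonhole on candidate completions produces the required $e^*$.

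The main obstacle is this completion step, which requires a separate case analysis for $K_\ell^\alpha[t]$, $K_\ell^\beta[t]$, and $K_\ell^\gamma[t]$: the two extra non-transversal edges supplied by the structural description must be matched respectively to a path, a matching, or a pair of edges in two different parts of $F$, while the $K_\ell[t]$-blowup is routed through the abundant transversal structure. The hypothesis $r \ge 4$ is what makes this uniform across the three cases: the $r-2 \ge 2$ free vertices per hyperedge provide enough flexibility both to embed the blowup cleanly and to reroute the completing hyperedge around the constant-sized set of forbidden colors, which is precisely why a single bound $t_r(n,\ell)+2$ suffices and the finer split observed at $r=3$ disappears.
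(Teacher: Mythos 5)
Your lower bound is fine: it is the standard construction behind the inequality $\mathrm{ar}(n,F)\ge \mathrm{ex}(n,F_{-})+2$ that the paper invokes (rainbow $T_{r}(n,\ell)$ plus one colour on all non-transversal edges, together with the check that $\chi(F-e)=\ell+1$ for every edge $e$), so that half matches the intended argument.

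The upper bound, however, has a genuine gap at its final completion step. You find one copy $\mathcal{C}$ of a member of $H_{F_{-}}^{r}$ in $\mathcal{H}$ and argue by pigeonhole that among the $\Theta(n^{r-2})$ hyperedges through the relevant pair some hyperedge $e^{*}$ avoids the $|F|-1$ colours of $\mathcal{C}$, ``using that every colour class is a proper subgraph of $K_n^r$.'' That premise gives no bound at all: a colour class of a rainbow-$H_F^r$-free colouring can have size $\Theta(n^{r})$ (in the extremal colouring all non-transversal edges form a single class), and nothing prevents one of the $|F|-1$ colours of $\mathcal{C}$ from covering \emph{every} candidate completion. Indeed, rainbow-$H_F^r$-freeness forces exactly this behaviour: as in Claim~\ref{CLAIM:anti-Ramsey-splitting-e_i-cup-S}, every hyperedge completing a copy of a member of $H_{F_-}^{r}$ must reuse a colour already present in that copy, so the pigeonhole you propose is precisely what fails, and this obstruction is the heart of the problem rather than a technicality. (Two further soft spots: the exact Tur\'an statement $\mathrm{ex}(n,H_{F_{-}}^{r})=t_r(n,\ell)$ is asserted via Pikhurko's theorem, which covers $H_{\ell+1}^{r}$ but not expansions of $K_{\ell}^{+}[t]$, so it would itself need proof; and the ``cleanup'' structure ``$T_r(n,\ell)$ minus $s$ transversal edges plus $s+2$ non-transversal ones'' is, relative to a fixed partition, just edge counting and carries no control on $s$.) The paper's route is different at exactly this point: after the removal and stability steps it introduces the sets $D_i$, $\overline{D}$, $\mathcal{B}$, $\mathcal{M}$ and proves the key structural statement (Claim~\ref{CLAIM:edge-critical-two-bad-edgesr>3}) that $\mathcal{B}$ cannot contain two edges each having two vertices in some $D_i$; this is shown by building a rainbow copy of $H_F^r$ from the two bad edges and a blowup embedded greedily via Lemma~\ref{LEMMA:greedily-embedding-Gi}, with explicit colour avoidance (choosing $f$ with $\chi(f)\neq\chi(e)$ and deleting an edge of colour $\chi(f)$ beforehand), and the theorem then follows from the counting $|\mathcal{B}_0|+|\mathcal{B}_1|\le 1$, $|\mathcal{B}_{\ge 2}|\le n^{r-2}|\overline{D}|^{2}$ versus $|\mathcal{M}|\ge \delta^{1/3}n^{r-1}|\overline{D}|$. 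Some mechanism of this kind, which manufactures a rainbow copy while controlling colours rather than hoping a fresh colour exists, is what your write-up is missing.
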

\textbf{Remarks.}
\begin{itemize}
    \item The analogous result for the case $r=2$ is handled by a result of Jiang--Pikhurko in~\cite{JP09}. 
    \item After submitting this manuscript, we learned that Li--Tang--Yan~\cite{LTY24a} independently obtained similar results.
\end{itemize}

Observe that for every $r\ge 3$, the $r$-graph $H_{\ell+1}^{r}$ is in the expansion of $K_{\ell}^{\gamma}[4]$. Hence, we obtain the following corollary, which is an extension of~\eqref{equ:ESS-complete-graph} to hypergraphs. 
\begin{corollary}
     Let $\ell \ge r \ge 3$ be fixed integers. 
    For all sufficiently large $n$, 
    \begin{align*}
        \mathrm{ar}(n,H_{\ell+1}^{r}) 
            = t_{r}(n,\ell) + 2.  
    \end{align*}
\end{corollary}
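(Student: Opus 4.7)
The plan is to establish the equality $\mathrm{ar}(n,H_{\ell+1}^{r})=t_{r}(n,\ell)+2$ by bracketing it with matching upper and lower bounds, both routed through the auxiliary graphs $K_{\ell}^{\gamma}[t]$ treated in Theorems~\ref{THM:antiRamsey-expansion-edge-criticalr=3}--\ref{THM:antiRamsey-expansion-edge-criticalr>3}.

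For the upper bound, I would first verify the announced sub-hypergraph inclusion $H_{\ell+1}^{r}\subseteq H_{K_{\ell}^{\gamma}[t]}^{r}$ for every $t\ge 2$, which reduces to checking $K_{\ell+1}\subseteq K_{\ell}^{\gamma}[t]$: picking the two endpoints of one of the two extra within-part edges, together with a single representative from each of the remaining $\ell-1$ blowup parts, yields $\ell+1$ pairwise adjacent vertices --- the extra edge supplies the unique same-part pair, and the complete bipartite $K_{t,t}$ between distinct blowup parts supplies every cross-part pair. Since expansion preserves sub-hypergraph containment and a rainbow copy of a hypergraph contains a rainbow copy of any sub-hypergraph, this gives $\mathrm{ar}(n,H_{\ell+1}^{r})\le \mathrm{ar}(n,H_{K_{\ell}^{\gamma}[t]}^{r})$. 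For $r\ge 4$ I would take $t=4$ and invoke Theorem~\ref{THM:antiRamsey-expansion-edge-criticalr>3}; for $r=3$, since Theorem~\ref{THM:antiRamsey-expansion-edge-criticalr=3} requires $t>\ell$, I would take $t=\ell+1$ instead and note that the same $K_{\ell+1}$-inclusion still goes through. Both cases give the upper bound $t_{r}(n,\ell)+2$.

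For the lower bound, I would exhibit a surjective coloring $\chi\colon K_{n}^{r}\to [t_{r}(n,\ell)+1]$ admitting no rainbow copy of $H_{\ell+1}^{r}$. The natural candidate is the Tur\'an-based construction: give each edge of $T_{r}(n,\ell)$ its own dedicated color (using $t_{r}(n,\ell)$ colors in total) and assign one extra color to every edge outside $T_{r}(n,\ell)$. In any purported rainbow $H_{\ell+1}^{r}$-copy, at most one edge can carry the extra color, so the remaining $\binom{\ell+1}{2}-1$ edges must embed in $T_{r}(n,\ell)$ as a copy of $(H_{\ell+1}^{r})_{-}$. The pigeonhole principle forces two of the $\ell+1$ core vertices into a common part of $T_{r}(n,\ell)$, which pinpoints exactly which expansion edge must be the missing one; from here the target is to extract a contradiction by tracing the forced partition placements of the remaining core and expansion vertices against the unique extra-color edge.

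The main obstacle will be the lower-bound verification. Because $T_{r}(n,\ell)$ itself contains copies of $(H_{\ell+1}^{r})_{-}$, a direct Erd\H{o}s--Simonovits--S\'os-style argument does not suffice, and the contradiction has to be distilled from the fine structure of how the expansion edges interact with the partition $V_{1}\sqcup\cdots\sqcup V_{\ell}$. I expect the successful analysis to parallel the lower-bound step of Theorems~\ref{THM:antiRamsey-expansion-edge-criticalr=3}--\ref{THM:antiRamsey-expansion-edge-criticalr>3}, where the two extra within-part edges of $K_{\ell}^{\gamma}[t]$ drive the obstruction; in the present setting that role is taken over by the pigeonhole coincidence forced on the $\ell+1$ core vertices of $H_{\ell+1}^{r}$ against only $\ell$ available parts.
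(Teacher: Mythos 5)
Your upper bound is fine and is exactly the paper's route: $K_{\ell+1}\subseteq K_{\ell}^{\gamma}[t]$, monotonicity of $\mathrm{ar}$ under taking sub-hypergraphs, and Theorems~\ref{THM:antiRamsey-expansion-edge-criticalr=3} and~\ref{THM:antiRamsey-expansion-edge-criticalr>3} (your substitution $t=\ell+1$ when $r=3$ is indeed needed to meet the hypothesis $t>\ell$). The genuine gap is the lower bound, and it is not a matter of "distilling a contradiction from the fine structure": the coloring you propose actually contains a rainbow copy of $H_{\ell+1}^{r}$. Place two core vertices $u,v$ in $V_1$ and the remaining $\ell-1$ core vertices in distinct parts $V_2,\ldots,V_{\ell}$. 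Since $\ell\ge r$, every cross-part core pair can be expanded by $r-2$ fresh vertices chosen from parts avoiding that pair, with all expansion sets pairwise disjoint; these $\binom{\ell+1}{2}-1$ edges lie in $T_{r}(n,\ell)$ and carry pairwise distinct dedicated colors. The pair $\{u,v\}$ is expanded by an edge with two vertices in $V_1$, which lies outside $T_{r}(n,\ell)$ and hence carries the single extra color, distinct from all the others. So the pigeonhole coincidence you hope to exploit forces only \emph{one} same-part core pair, and one extra-color edge is exactly enough to accommodate it; this is precisely the difference from $K_{\ell}^{\gamma}[t]$, whose expansion forces two within-part pairs in two different parts and is therefore killed by a single extra color.

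Moreover, no alternative construction can rescue the stated lower bound, so the plan cannot be repaired as stated: by Pikhurko's exact result quoted in the paper, $\mathrm{ex}(n,H_{\ell+1}^{r})=t_{r}(n,\ell)$ for large $n$, and in any surjective coloring with $t_{r}(n,\ell)+1$ colors, picking one edge of each color gives a rainbow subgraph with more than $\mathrm{ex}(n,H_{\ell+1}^{r})$ edges, hence a rainbow $H_{\ell+1}^{r}$; thus $\mathrm{ar}(n,H_{\ell+1}^{r})\le t_{r}(n,\ell)+1$. The paper's own one-line justification (the containment into the expansion of $K_{\ell}^{\gamma}[4]$) likewise only delivers the upper bound $t_{r}(n,\ell)+2$; the version that is consistent with the $\gamma$-theorems and genuinely extends the Erd\H{o}s--Simonovits--S\'os formula carries a shifted index: $K_{\ell+2}\subseteq K_{\ell}^{\gamma}[4]$ gives $\mathrm{ar}(n,H_{\ell+2}^{r})\le t_{r}(n,\ell)+2$, and since every member of $(H_{\ell+2}^{r})_{-}$ contains $H_{\ell+1}^{r}$ while $T_{r}(n,\ell)$ is $H_{\ell+1}^{r}$-free, the standard bound $\mathrm{ar}(n,F)\ge \mathrm{ex}(n,F_{-})+2$ supplies the matching lower bound, yielding $\mathrm{ar}(n,H_{\ell+2}^{r})=t_{r}(n,\ell)+2$ (equivalently, $\mathrm{ar}(n,H_{\ell+1}^{r})=t_{r}(n,\ell-1)+2$ for $\ell\ge r+1$). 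If you prove that statement, your Tur\'an-plus-one-color construction goes through verbatim, because a rainbow copy would force a member of $(H_{\ell+2}^{r})_{-}$, and hence an $H_{\ell+1}^{r}$, inside $T_{r}(n,\ell)$, which is impossible.
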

%
Proofs for Theorems~\ref{THM:antiRamsey-expansion-edge-criticalr=3} and~\ref{THM:antiRamsey-expansion-edge-criticalr>3} are presented in Section~\ref{SEC:proof-antiRamsey-expansion-edge-criticalr=3}.

\section{Proof of Theorem~\ref{THM:antiRamsey-expansion-hypergraph-splitting}}\label{SEC:proof-antiRamsey-expansion-hypergraph-splitting}
%

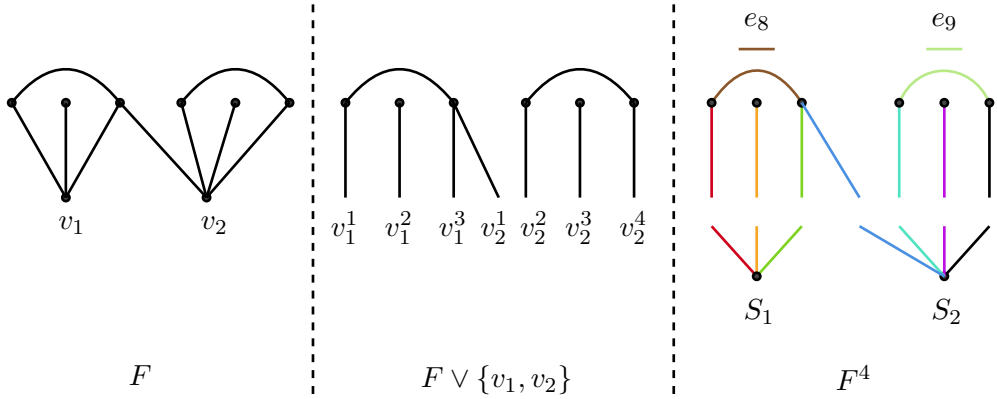
\begin{figure}[htbp]
\centering
\tikzset{every picture/.style={line width=1pt}} 

\begin{tikzpicture}[x=0.75pt,y=0.75pt,yscale=-1,xscale=1,scale=0.9]
%
\draw [color={rgb, 255:red, 139; green, 87; blue, 42 }  ,draw opacity=1 ]   (443,85) .. controls (462.33,56.67) and (480.33,65.67) .. (493,85) ;
\draw [color={rgb, 255:red, 139; green, 87; blue, 42 }  ,draw opacity=1 ]   (478,55) -- (458,55) ;
\draw [color={rgb, 255:red, 208; green, 2; blue, 27 }  ,draw opacity=1 ]   (443,85) -- (443,138) ;
\draw [color={rgb, 255:red, 245; green, 166; blue, 35 }  ,draw opacity=1 ]   (468,85) -- (468,138) ;
\draw [color={rgb, 255:red, 126; green, 211; blue, 33 }  ,draw opacity=1 ]   (493,85) -- (493,138) ;
\draw [fill=uuuuuu]   (443,85)  circle (1.5pt);
\draw [fill=uuuuuu]   (468,85)  circle (1.5pt);
\draw [fill=uuuuuu]   (493,85)  circle (1.5pt);
%
\draw [color={rgb, 255:red, 126; green, 211; blue, 33 }  ,draw opacity=1 ]   (468,182) -- (493,154) ;
\draw [color={rgb, 255:red, 245; green, 166; blue, 35 }  ,draw opacity=1 ]   (468,182) -- (468,154) ;
\draw [color={rgb, 255:red, 208; green, 2; blue, 27 }  ,draw opacity=1 ]   (468,182) -- (443,154) ;
\draw [fill=uuuuuu]   (468,182)  circle (1.5pt);
%
\draw [color={rgb, 255:red, 184; green, 233; blue, 134 }  ,draw opacity=1 ]   (547,85) .. controls (557,60.67) and (587,61.67) .. (597,85)  ;
\draw [color={rgb, 255:red, 184; green, 233; blue, 134 }  ,draw opacity=1 ]   (562,55) -- (582,55) ;
\draw [color={rgb, 255:red, 189; green, 16; blue, 224 }  ,draw opacity=1 ]   (572,85) -- (572,138) ;
\draw [color={rgb, 255:red, 74; green, 144; blue, 226 }  ,draw opacity=1 ]   (493,85) -- (525,138) ;
\draw [color={rgb, 255:red, 80; green, 227; blue, 194 }  ,draw opacity=1 ]   (547,85) -- (547,138) ;
\draw [color={rgb, 255:red, 0; green, 0; blue, 0 }  ,draw opacity=1 ]   (597,85) -- (597,138) ;
\draw [fill=uuuuuu]   (572,85)  circle (1.5pt);
\draw [fill=uuuuuu]   (597,85)  circle (1.5pt);
\draw [fill=uuuuuu]   (547,85)  circle (1.5pt);
%
\draw    (572,182) -- (597,154) ;
\draw [fill=uuuuuu]   (572,182)  circle (1.5pt);
\draw [color={rgb, 255:red, 189; green, 16; blue, 224 }  ,draw opacity=1 ]   (572,182) -- (572,154) ;
\draw [color={rgb, 255:red, 80; green, 227; blue, 194 }  ,draw opacity=1 ]   (572,182) -- (547,154) ;
\draw [color={rgb, 255:red, 74; green, 144; blue, 226 }  ,draw opacity=1 ]   (572,182) -- (525,154) ;
\draw [fill=uuuuuu]   (85,138)  circle (1.5pt);
\draw [fill=uuuuuu]   (55,85)  circle (1.5pt);
\draw [fill=uuuuuu]   (85,85)  circle (1.5pt);
\draw [fill=uuuuuu]   (115,85)  circle (1.5pt);
\draw [color={rgb, 255:red, 0; green, 0; blue, 0 }  ,draw opacity=1 ]   (55,85) -- (85,138) ;
\draw [color={rgb, 255:red, 0; green, 0; blue, 0 }  ,draw opacity=1 ]   (85,85) -- (85,138) ;
\draw [color={rgb, 255:red, 0; green, 0; blue, 0 }  ,draw opacity=1 ]   (115,85) -- (85,138) ;
\draw [fill=uuuuuu]   (163,138)  circle (1.5pt);
\draw [fill=uuuuuu]   (149,85)  circle (1.5pt);
\draw [fill=uuuuuu]   (179,85)  circle (1.5pt);
\draw [fill=uuuuuu]   (209,85)  circle (1.5pt);
\draw [color={rgb, 255:red, 0; green, 0; blue, 0 }  ,draw opacity=1 ]   (179,85) -- (163,138) ;
\draw [color={rgb, 255:red, 0; green, 0; blue, 0 }  ,draw opacity=1 ]   (115,85) -- (163,138) ;
\draw [color={rgb, 255:red, 0; green, 0; blue, 0 }  ,draw opacity=1 ]   (149,85) -- (163,138) ;
\draw [color={rgb, 255:red, 0; green, 0; blue, 0 }  ,draw opacity=1 ]   (209,85) -- (163,138) ;
\draw [color={rgb, 255:red, 0; green, 0; blue, 0 }  ,draw opacity=1 ]   (55,85) .. controls (75,60) and (95,60) .. (115,85) ;
\draw [color={rgb, 255:red, 0; green, 0; blue, 0 }  ,draw opacity=1 ]   (149,85) .. controls (169,60) and (189,60) .. (209,85) ;
\draw [fill=uuuuuu]   (240,85)  circle (1.5pt);
\draw [fill=uuuuuu]   (270,85)  circle (1.5pt);
\draw [fill=uuuuuu]   (300,85)  circle (1.5pt);
\draw [color={rgb, 255:red, 0; green, 0; blue, 0 }  ,draw opacity=1 ]   (240,85) -- (240,138) ;
\draw [color={rgb, 255:red, 0; green, 0; blue, 0 }  ,draw opacity=1 ]   (270,85) -- (270,138) ;
\draw [color={rgb, 255:red, 0; green, 0; blue, 0 }  ,draw opacity=1 ]   (300,85) -- (300,138) ;
\draw [color={rgb, 255:red, 0; green, 0; blue, 0 }  ,draw opacity=1 ]   (240,85) .. controls (260,60) and (280,60) ..  (300,85) ;
\draw [fill=uuuuuu]   (370,85)  circle (1.5pt);
\draw [fill=uuuuuu]   (340,85)  circle (1.5pt);
\draw [fill=uuuuuu]   (400,85)  circle (1.5pt);
\draw [color={rgb, 255:red, 0; green, 0; blue, 0 }  ,draw opacity=1 ]   (370,85) -- (370,138) ;
\draw [color={rgb, 255:red, 0; green, 0; blue, 0 }  ,draw opacity=1 ]   (300,85) -- (325,138) ;
\draw [color={rgb, 255:red, 0; green, 0; blue, 0 }  ,draw opacity=1 ]   (340,85) -- (340,138) ;
\draw [color={rgb, 255:red, 0; green, 0; blue, 0 }  ,draw opacity=1 ]   (400,85) -- (400,138) ;
\draw [color={rgb, 255:red, 0; green, 0; blue, 0 }  ,draw opacity=1 ]   (340,85) .. controls (360,60) and (380,60) .. (400,85) ;

\draw (459,193) node [anchor=north west][inner sep=0.75pt]   [align=left] {$S_1$};
\draw (563,193) node [anchor=north west][inner sep=0.75pt]   [align=left] {$S_2$};
\draw (510,230) node [anchor=north west][inner sep=0.75pt]   [align=left] {$F^4$};
\draw (459,33) node [anchor=north west][inner sep=0.75pt]   [align=left] {$e_8$};
\draw (563,33) node [anchor=north west][inner sep=0.75pt]   [align=left] {$e_9$};
\draw (79,147) node [anchor=north west][inner sep=0.75pt]   [align=left] {$v_1$};
\draw (158,147) node [anchor=north west][inner sep=0.75pt]   [align=left] {$v_2$};
\draw (118,230) node [anchor=north west][inner sep=0.75pt]   [align=left] {$F$};
\draw (230,144) node [anchor=north west][inner sep=0.75pt]   [align=left] {$v_1^{1}$};
\draw (260,144) node [anchor=north west][inner sep=0.75pt]   [align=left] {$v_{1}^2$};
\draw (290,144) node [anchor=north west][inner sep=0.75pt]   [align=left] {$v_{1}^3$};
\draw (313,144) node [anchor=north west][inner sep=0.75pt]   [align=left] {$v_2^1$};
\draw (335,144) node [anchor=north west][inner sep=0.75pt]   [align=left] {$v_2^2$};
\draw (360,144) node [anchor=north west][inner sep=0.75pt]   [align=left] {$v_2^3$};
\draw (390,144) node [anchor=north west][inner sep=0.75pt]   [align=left] {$v_2^4$};
%
\draw (280,230) node [anchor=north west][inner sep=0.75pt]   [align=left] {$F\vee\{v_1, v_2\}$};
\draw  [dashed]  (222,30) -- (222,250) ;
\draw  [dashed]  (422,30) -- (422,250) ;
\end{tikzpicture}
\caption{From $F$ to $F\vee\{v_1, v_2\}$ and then to  $F^4$, where pairs with the same color form a hyperedge.}
\label{fig:splitting}
\end{figure}

\begin{proof}[Proof of Theorem~\ref{THM:antiRamsey-expansion-hypergraph-splitting}]
    Let $r > k \ge 2$ be integers and $F$ be a $k$-graph. 
    Let $n$ be sufficiently large.
    Let $\chi\colon K_{n}^{r} \to \mathbb{N}$ be a rainbow-$H_{F}^{r}$-free coloring and $\mathcal{H} \subseteq K_{n}^{r}$ be a rainbow subgraph. 
    Let $\mathcal{C}$ be a maximal collection of pairwise edge-disjoint copies of members in $H_{F_{-}}^{r}$. In other words, members in $\mathcal{C}$ are pairwise edge-disjoint, and if $H \subseteq \mathcal{H}$ is a copy of some member in $H_{F_{-}}^{r}$, then $H$ contains at least one edge from some member in $\mathcal{C}$. 
    For convenience, let us assume that $\mathcal{C} = \{Q_1, \ldots, Q_m\}$, where $m \ge 0$ is an integer and $Q_i \subseteq \mathcal{H}$ is a copy of some member in $H_{F_{-}}^{r}$ for $i\in [m]$. 

    Let $i\in [m]$ and $S\subseteq [n]\setminus V(Q_i)$ be an $(r-k)$-set.  Since $Q_i$ is a copy of some member in $H_{F_{-}}^{r}$, there exists a $k$-set $e_i \subseteq V(Q_i)$ such that $Q_i \cup \{\{e_i \cup S\}\}$ is a copy of $H_{F}^{r}$. 
    We let $\mathcal{A}$ be an auxiliary multi-$k$-graph whose edge set is the collection of $e_i$ for all $i\in [m]$. 
    For $i\in [m]$ let $\chi(Q_i) := \{\chi(e)\colon e\in Q_i\}$. 
    Since $\mathcal{H}$ is rainbow and $Q_i \subseteq \mathcal{H}$ for $i\in [m]$, we know that 
    \begin{align}\label{equ:anti-Ramsey-splitting-disjoint-color-set}
        \chi(Q_i) \cap  \chi(Q_j) = \emptyset
        \quad\text{for all}\quad 1\le i < j \le m. 
    \end{align}

    \begin{claim}\label{CLAIM:anti-Ramsey-splitting-e_i-cup-S}
        For every $i\in [m]$ and for every $(r-k)$-set $S\subseteq [n]\setminus V(Q_i)$, we have $\chi(e_i \cup S) \in \chi(Q_i)$. 
    \end{claim}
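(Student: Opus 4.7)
The plan is to observe that Claim~\ref{CLAIM:anti-Ramsey-splitting-e_i-cup-S} follows almost immediately from the way $e_i$ was chosen, together with the rainbow-$H_{F}^{r}$-free hypothesis on $\chi$ and the rainbow property of $\mathcal{H}$. No combinatorial construction is needed beyond unpacking the definitions.

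First I would fix $i \in [m]$ and an $(r-k)$-set $S \subseteq [n]\setminus V(Q_i)$, and look at the $r$-graph $R := Q_i \cup \{e_i \cup S\}$ sitting inside $K_n^r$. By the defining property of $e_i$, the $r$-graph $R$ is a copy of $H_{F}^{r}$. Since $S$ is disjoint from $V(Q_i)$, while every edge of $Q_i$ is contained in $V(Q_i)$, the new edge $e_i \cup S$ is not already in $Q_i$, so $R$ has exactly $|Q_i|+1 = |H_{F}^{r}|$ distinct edges.

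Next I would invoke the rainbow-$H_{F}^{r}$-free assumption: the copy $R$ of $H_{F}^{r}$ cannot be rainbow under $\chi$. On the other hand, $Q_i \subseteq \mathcal{H}$ and $\mathcal{H}$ is rainbow, so all edges of $Q_i$ receive pairwise distinct colors. The only way the copy $R$ can fail to be rainbow is therefore that the color of the extra edge equals the color of some $e' \in Q_i$; that is, $\chi(e_i \cup S) \in \chi(Q_i)$, as claimed.

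I do not expect any real obstacle here: the claim is essentially a direct restatement of the rainbow-$H_{F}^{r}$-free assumption applied to the canonical $H_{F}^{r}$-completion of $Q_i$ via the external $(r-k)$-set $S$. Its role in the broader proof is structural rather than technical: for each fixed $i$, it forces the colors of all $r$-edges of the form $e_i \cup S$, with $S \subseteq [n]\setminus V(Q_i)$ an $(r-k)$-set, to lie in the small palette $\chi(Q_i)$ of size $|F|-1$. Combined with~\eqref{equ:anti-Ramsey-splitting-disjoint-color-set}, this will later let the argument charge at most $|F|-1$ such "colored copies" to each edge of $\mathcal{A}$ and then apply $\mathrm{ex}(n,\mathrm{Split}(F))$ to bound $|\mathcal{A}|$, which is the mechanism behind the stated bound on $\mathrm{ar}(n,H_{F}^{r})$.
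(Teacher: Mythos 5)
Your proposal is correct and follows the same argument as the paper: by the choice of $e_i$, the graph $Q_i \cup \{e_i \cup S\}$ is a copy of $H_F^r$, so by rainbow-$H_F^r$-freeness it is not rainbow; since $Q_i$ is rainbow, the color of $e_i\cup S$ must repeat a color from $\chi(Q_i)$. You present this directly rather than by contradiction, but that is only a stylistic difference.
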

    \begin{proof}
        Suppose to the contrary that $\chi(e_i \cup S) \not\in \chi(Q_i)$, then the $r$-graph $Q_i \cup \{\{e_i \cup S\}\}$ would be a rainbow copy of $H_{F}^{r}$, a contradiction. 
    \end{proof}

    \begin{claim}\label{CLAIM:anti-Ramsey-splitting-no-multiset}
        The set $\mathcal{A}$ does not contain multi-sets. In other words, $e_i \neq e_j$ for all $1 \le i < j \le m$. 
    \end{claim}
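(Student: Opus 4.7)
The plan is to proceed by contradiction. Suppose there exist indices $1 \le i < j \le m$ with $e_i = e_j =: e$. The strategy is to exhibit a single $(r-k)$-set $S$ that plays the role of the ``completion set'' for both $Q_i$ and $Q_j$ simultaneously, and then derive a contradiction with the color-disjointness relation~\eqref{equ:anti-Ramsey-splitting-disjoint-color-set}.

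First, I would note that $|V(Q_i)|$ and $|V(Q_j)|$ depend only on $|F|$ and $r$ (each $Q_{\ell}$ is a copy of a member of $H_{F_{-}}^{r}$, hence has exactly $k|F|-r$ vertices outside the core, plus the vertices of that core). Since $n$ is sufficiently large, we may pick an $(r-k)$-set
\[
    S \subseteq [n] \setminus \bigl(V(Q_i) \cup V(Q_j)\bigr).
\]
This set is disjoint from $V(Q_i)$ and from $V(Q_j)$, so Claim~\ref{CLAIM:anti-Ramsey-splitting-e_i-cup-S} applies to both indices.

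Applying Claim~\ref{CLAIM:anti-Ramsey-splitting-e_i-cup-S} to $Q_i$ gives $\chi(e\cup S) = \chi(e_i \cup S) \in \chi(Q_i)$, and applying it to $Q_j$ gives $\chi(e\cup S) = \chi(e_j \cup S) \in \chi(Q_j)$. Hence
\[
    \chi(e \cup S) \in \chi(Q_i) \cap \chi(Q_j),
\]
contradicting~\eqref{equ:anti-Ramsey-splitting-disjoint-color-set}, which asserts that $\chi(Q_i) \cap \chi(Q_j) = \emptyset$. Therefore the assumption $e_i = e_j$ is untenable, completing the proof.

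The argument is essentially a one-line pigeonhole using the two previous claims; there is no real obstacle. The only mild subtlety is ensuring that the ``large $n$'' hypothesis is sufficient to guarantee the existence of $S$ disjoint from $V(Q_i) \cup V(Q_j)$, which is immediate since $|V(Q_i) \cup V(Q_j)|$ is bounded by a constant depending only on $F$ and $r$.
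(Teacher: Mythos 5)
Your proof is correct and follows essentially the same argument as the paper: fix an $(r-k)$-set $S$ disjoint from $V(Q_i)\cup V(Q_j)$, apply Claim~\ref{CLAIM:anti-Ramsey-splitting-e_i-cup-S} to both $Q_i$ and $Q_j$, and contradict~\eqref{equ:anti-Ramsey-splitting-disjoint-color-set}. The parenthetical count of $|V(Q_\ell)|$ is slightly off, but all you need (and correctly use) is that it is bounded by a constant depending only on $F$ and $r$, so the argument stands as is.
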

    \begin{proof}
        Suppose to the contrary that $e_i = e_j =: e$ for some $i\neq j$. 
        Let $S\subseteq  [n]\setminus \left(V(Q_i)\cup V(Q_j)\right)$ be an $(r-k)$-set. 
        It follows from Claim~\ref{CLAIM:anti-Ramsey-splitting-e_i-cup-S} and~\eqref{equ:anti-Ramsey-splitting-disjoint-color-set} that $\chi(e \cup S) \in  \chi(Q_i) \cap  \chi(Q_j) = \emptyset$, a contradiction. 
    \end{proof}

    \begin{claim}\label{CLAIM:antiRamsey-splitting-free}
        The $k$-graph $\mathcal{A}$ is $\mathrm{Split}(F)$-free. 
        In particular, $m \le \mathrm{ex}(n, \mathrm{Split}(F))$.     
    \end{claim}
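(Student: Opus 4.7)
The plan is to argue by contradiction. Suppose $\mathcal{A}$ contains a copy of some $\hat{F}\in\mathrm{Split}(F)$; write $\hat{F}=F\vee I$ for an independent set $I=\{u_1,\dots,u_\ell\}\subseteq V(F)$, and fix an embedding $\phi\colon V(\hat{F})\hookrightarrow V(\mathcal{A})\subseteq[n]$ realizing this copy. Enumerate $F=\{f_1,\dots,f_s\}$ where $s=|F|$; each edge $\hat{f}_t$ of $\hat{F}$ arises from $f_t$ by replacing every $u\in f_t\cap I$ with a distinct fresh vertex $v_{u,f_t}$, so $\phi(\hat{f}_t)=e_{i_t}$ for some $i_t\in[m]$, and Claim~\ref{CLAIM:anti-Ramsey-splitting-no-multiset} guarantees the indices $i_1,\dots,i_s$ are pairwise distinct. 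My aim is to manufacture a rainbow copy $\{E_1,\dots,E_s\}\subseteq\binom{[n]}{r}$ of $H_F^r$ with $E_t\supseteq e_{i_t}$ for each $t$, then invoke Claim~\ref{CLAIM:anti-Ramsey-splitting-e_i-cup-S} and~\eqref{equ:anti-Ramsey-splitting-disjoint-color-set} to show rainbowness, contradicting the rainbow-$H_F^r$-freeness of $\chi$.

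For the construction I would pick one fresh vertex $y_u\in[n]$ for each $u\in I$, and for each $t\in[s]$ a fresh set $R_t\subseteq[n]$ of size $r-k-|f_t\cap I|$, choosing all of them pairwise disjoint and disjoint from $\phi(V(\hat{F}))\cup\bigcup_{t\in[s]}V(Q_{i_t})$; this is possible for $n$ large because the forbidden set is of size bounded only in terms of $|F|$, $k$, and $r$. Set
$$
E_t:=e_{i_t}\cup\{y_u:u\in f_t\cap I\}\cup R_t,
$$
which is a set of size $k+|f_t\cap I|+(r-k-|f_t\cap I|)=r$. To verify that $\{E_1,\dots,E_s\}$ forms a copy of $H_F^r$, exhibit the vertex map $\psi\colon V(H_F^r)\to[n]$ given by $\psi(v)=\phi(v)$ for $v\in V(F)\setminus I$, $\psi(u)=y_u$ for $u\in I$, and sending the $r-k$ expansion vertices of the $f_t$-edge bijectively onto $\{\phi(v_{u,f_t}):u\in f_t\cap I\}\cup R_t$; the disjointness choices make $\psi$ injective, and a term-by-term comparison yields $\psi(f_t\cup S_{f_t})=E_t$ for every $t$.

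For rainbowness, write $E_t=e_{i_t}\cup S_t$ with $S_t:=\{y_u:u\in f_t\cap I\}\cup R_t$, which is an $(r-k)$-set disjoint from $V(Q_{i_t})$ by construction; Claim~\ref{CLAIM:anti-Ramsey-splitting-e_i-cup-S} then forces $\chi(E_t)\in\chi(Q_{i_t})$, and~\eqref{equ:anti-Ramsey-splitting-disjoint-color-set} says the palettes $\chi(Q_{i_1}),\dots,\chi(Q_{i_s})$ are pairwise disjoint, so the colors $\chi(E_1),\dots,\chi(E_s)$ are all distinct. The resulting rainbow $H_F^r$ contradicts the hypothesis on $\chi$, establishing that $\mathcal{A}$ is $\mathrm{Split}(F)$-free, whence the advertised bound $m=|\mathcal{A}|\le\mathrm{ex}(n,\mathrm{Split}(F))$ follows. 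The main obstacle I anticipate is the careful disjointness bookkeeping — simultaneously making the $y_u$'s and $R_t$'s avoid every $V(Q_{i_t})$ together with the image of $\phi$, and checking that each size $|R_t|=r-k-|f_t\cap I|$ is nonnegative; this last size condition is automatic for the graph case $k=2$ and for the strongly-independent $I$ used in the corollaries (where every edge of $F$ meets $I$ in at most one vertex), which is the setting in which the subsequent consequences are applied.
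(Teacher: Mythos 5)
Your proof is correct and is essentially the paper's own argument: your shared vertices $y_u$ together with the filler sets $R_t$ are exactly the paper's pairwise vertex-disjoint $d_i$-stars and $(r-k)$-sets $e'_j$ (with $y_u$ being the star center that "plays the role of" the split vertex $u$), and rainbowness is deduced from the same two ingredients, Claim~\ref{CLAIM:anti-Ramsey-splitting-e_i-cup-S} together with~\eqref{equ:anti-Ramsey-splitting-disjoint-color-set}, plus Claim~\ref{CLAIM:anti-Ramsey-splitting-no-multiset} to get distinct indices. The nonnegativity caveat $|R_t|=r-k-|f_t\cap I|\ge 0$ that you flag is implicitly present in the paper's version too (its star bookkeeping assumes every edge of $F$ meets $I$ in at most one vertex, which is the situation in all applications), so it does not distinguish your argument from theirs.
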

    \begin{proof}
        Suppose to the contrary that $\mathcal{A}$ contains some member $\hat{F} \in \mathrm{Split}(F)$. 
        By the definition of $\mathrm{Split}(F)$, there exists an independent set $I= \{v_1, \ldots, v_{p}\}$ in $F$ such that $\hat{F} = F\vee I$, where $p\ge 0$ is an integer.
        Let $d_i:= d_{F}(v_i)$ for $i\in [p]$, and let $d:= \sum_{i\in[p]}d_i$. 
        Assume that $\hat{F} = \{f_1, \ldots, f_{\ell}\}$, where $\ell:= |\hat{F}|$.
        Let $\psi\colon \hat{F} \to \mathcal{A}$ be an embedding. 
        By relabelling members in $\mathcal{C}$ if necessary, we may assume that $\psi(f_i) = e_i$ for $i\in [\ell]$. 
        Let $U:= \bigcup_{i\in [\ell]}V(Q_i)$.
        \begin{itemize}
            \item  For $i\in [p]$, choose a $d_i$-star\footnote{A $d$-star is a collection of $d$ edges such that there is a unique vertex (called center) that is contained in all edges and two edges intersect only on this vertex.} $S_{d_i}$ from $\binom{[n]\setminus U}{r-k}$, and 
            \item for $j \in \left[d+1, \ell\right]$, choose an $(r-k)$-subset $e'_j$ of $[n]\setminus U$
        \end{itemize}
        such that elements in $\left\{S_{d_1}, \ldots, S_{d_p}, e'_{d+1}, \ldots, e'_{\ell}\right\}$ are pairwise vertex-disjoint. 
       We will use $\{e_1, \ldots, e_{\ell}\}$ and $\left\{S_{d_1}, \ldots, S_{d_p}, e'_{d+1}, \ldots, e'_{\ell}\right\}$ to build a rainbow copy of $H_{F}^{r}$. 

       By relabelling members in $\hat{F}$ if necessary, we may assume that
        \begin{align*}
            \{f_1, \ldots, f_{d_1}\}, \ldots, \{f_{d_1+\cdots+d_{p-1}+1}, \ldots, f_{d}\} \subseteq \hat{F}
        \end{align*}
        are edge sets obtained by splitting $v_1, \ldots, v_p$, respectively. 
       In other words,
       \begin{align*}
           \{f_{d_1+\cdots+d_{i-1}+1},\ldots,f_{d_1+\cdots+d_i}\} = (F\vee v_{i})\setminus F
           \quad\text{for}\quad i\in [p]. 
       \end{align*}
        We further assume that $S_{d_i} = \{e'_{d_1+\cdots+d_{i-1}+1},\ldots,e'_{d_1+\cdots+d_i}\}$ for $i\in [p]$. 
        Now let $E_j := e_j \cup e'_j$ for $j\in [\ell]$. 
        It is easy to observe that $\{E_1, \ldots, E_{\ell}\}$ is a copy of $H_{F}^{r}$ with the center of $S_{d_i}$ playing the role of $v_i$ for $i\in [p]$ (see Figure~\ref{fig:splitting} for an example when $k=2$ and $r=4$). 
        In addition, it follows from Claim~\ref{CLAIM:anti-Ramsey-splitting-e_i-cup-S} and~\eqref{equ:anti-Ramsey-splitting-disjoint-color-set} that $\{E_1, \ldots, E_{\ell}\}$ is rainbow, which contradicts the rainbow-$H_{F}^{r}$-freeness of the coloring $\chi$. 
    \end{proof}
    Let $\mathcal{H}':= \mathcal{H}\setminus\left(\bigcup_{i\in [m]}Q_i\right)$. 
    It follows from the maximality of $\mathcal{C}$ that $\mathcal{H}'$ is $H_{F_{-}}^{r}$-free.  
    In addition, it follows from Claim~\ref{CLAIM:antiRamsey-splitting-free} that $|\mathcal{H}'| \ge |\mathcal{H}| - |F|\cdot m \ge |\mathcal{H}|-|F|\cdot \mathrm{ex}(n, \mathrm{Split}(F))$, completing the proof of Theorem~\ref{THM:antiRamsey-expansion-hypergraph-splitting}. 
\end{proof}
%
\section{Proofs of Theorems~\ref{THM:antiRamsey-expansion-edge-criticalr=3} and~\ref{THM:antiRamsey-expansion-edge-criticalr>3}}\label{SEC:proof-antiRamsey-expansion-edge-criticalr=3}
We prove Theorems~\ref{THM:antiRamsey-expansion-edge-criticalr=3} and~\ref{THM:antiRamsey-expansion-edge-criticalr>3} in this section. Before that, we prove some useful lemmas. 
\begin{lemma}\label{LEMMA:ESS-removal}
    Let $r \ge 2$, $F$ be an $r$-graph, and $\chi\colon K_{n}^{r} \to \mathbb{N}$ be a rainbow-$F$-free coloring. 
    Every rainbow subgraph $\mathcal{H} \subseteq K_{n}^{r}$ can be made $F_{-}$-free by removing $o(n^r)$ edges.  
\end{lemma}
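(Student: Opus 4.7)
The plan is to show that any rainbow subgraph $\mathcal{H}$ contains only $o(n^{|V(F')|})$ copies of each $F' \in F_{-}$, and then to invoke the hypergraph removal lemma of Gowers and of Nagle--R\"odl--Schacht--Skokan to deduce that $\mathcal{H}$ can be made $F_{-}$-free by removing $o(n^r)$ edges. The key structural input is that rainbow-$F$-freeness of $\chi$ forces a very tight relationship between ``would-be'' completing edges and the colors already appearing in a copy of $F_{-}$.

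First I would observe that $\mathcal{H}$ is itself $F$-free: any copy of $F$ in $\mathcal{H}$ is automatically rainbow (since $\mathcal{H}$ is rainbow), which contradicts rainbow-$F$-freeness of $\chi$. Then I would fix a representative $F' = F \setminus \{e_0\} \in F_{-}$ and suppose for contradiction that $\mathcal{H}$ contains at least $\delta n^{|V(F_{-})|}$ copies of $F'$. Extending each such copy arbitrarily over any vertex of $e_0$ that becomes isolated in $F'$ produces at least $\delta' n^{|V(F)|}$ injections $\psi\colon V(F)\to [n]$ with $\psi(F')\subseteq \mathcal{H}$. Setting $g_\psi := \psi(e_0)$, the $F$-freeness of $\mathcal{H}$ forces $g_\psi \notin \mathcal{H}$, and rainbow-$F$-freeness of $\chi$ forces $\chi(g_\psi) \in \chi(\psi(F'))$. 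Since $\mathcal{H}$ is rainbow, there is a \emph{unique} edge $e_\psi \in \psi(F')$ with $\chi(e_\psi) = \chi(g_\psi)$.

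The crucial step is a double count of the triples $(\psi,e_\psi,g_\psi)$. On one hand there are at least $\delta' n^{|V(F)|}$ of them. On the other hand, the number of pairs $(e,g)$ with $e\in \mathcal{H}$, $g\in \binom{[n]}{r}\setminus \mathcal{H}$ and $\chi(e)=\chi(g)$ is at most $\sum_{e\in \mathcal{H}} (|\chi^{-1}(\chi(e))|-1) \le \binom{n}{r}$, because $\mathcal{H}$ is rainbow and the color classes of $\chi$ partition $\binom{[n]}{r}$. For each such pair $(e,g)$ and each of the $O(1)$ admissible placements of $e,g$ into positions within $F$, the number of extending injections $\psi$ is at most $n^{|V(F)|-|V(e)\cup V(g)|} \le n^{|V(F)|-r-1}$, where the last inequality uses $e \neq g$ (since $e\in \mathcal{H}$ and $g\notin \mathcal{H}$) to conclude $|V(e)\cup V(g)| \ge r+1$. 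Combining the two estimates gives $\delta' n^{|V(F)|} = O(n^{|V(F)|-1})$, which is false for $n$ large enough and yields the required contradiction.

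The main obstacle I anticipate is the bookkeeping around isolated vertices of $F_{-}$ (when some vertex of $e_0$ does not survive in $V(F')$), together with the clean application of the hypergraph removal lemma for the \emph{family} $F_{-}$ rather than for a single $r$-graph. The rainbow property of $\mathcal{H}$ is what really powers the argument: it is precisely this property that forces $e_\psi$ to be uniquely determined by the color $\chi(g_\psi)$, and this uniqueness is exactly what allows the $\binom{n}{r}$ bound on color-repeat pairs to defeat the $\Theta(n^{|V(F)|})$ lower count on triples.
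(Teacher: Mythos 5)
Your argument is correct, and it reaches the removal lemma by a genuinely different route than the paper. The paper's one-line proof cites the observation of Erd\H{o}s--Simonovits--S\'os that any rainbow subgraph of a rainbow-$F$-free coloring is $F_{-}[2]$-free, and then invokes the Hypergraph Removal Lemma (implicitly going through a supersaturation step: $F'[2]$-freeness implies $o(n^{|V(F')|})$ copies of $F'$). You bypass the $2$-blowup entirely and bound the number of copies of $F'$ directly: the key is your clean double count showing that the number of ``color-collision'' pairs $(e,g)$ with $e\in\mathcal{H}$, $g\notin\mathcal{H}$, $\chi(e)=\chi(g)$ is at most $\binom{n}{r}$ because $\mathcal{H}$ is rainbow, and that $e\neq g$ forces $|e\cup g|\geq r+1$, so each such pair extends to only $O(n^{|V(F)|-r-1})$ embeddings $\psi$. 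This yields the quantitatively stronger conclusion that $\mathcal{H}$ contains only $O(n^{|V(F')|-1})$ copies of each $F'\in F_{-}$ (the paper's route only gives $o(n^{|V(F')|})$), though this saving is washed out once the removal lemma is applied. Your approach is more self-contained since it does not appeal to the ESS blowup observation, at the cost of some bookkeeping around isolated vertices of $F'$; the paper's is shorter but leans on a cited result. One tiny slip: ``at least $\delta n^{|V(F_{-})|}$ copies of $F'$'' should read $\delta n^{|V(F')|}$.
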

\begin{proof}
    The lemma follows easily from the Hypergraph Removal Lemma (see e.g.~\cite{NRS06,RS06,Tao06,Gow07}) and the observation of Erd{\H o}s--Simonovits--S\'{o}s~\cite{ESS75} that every rainbow subgraph $\mathcal{H} \subseteq K_{n}^{r}$ is $F_{-}[2]$-free. 
\end{proof}
We also need the following strengthen of~{\cite[Lemma~4.5]{LMR1}}. 

Let $\mathcal{G}$ be an $r$-graph with vertex set~$[m]$ and let $V_1 \sqcup \cdots \sqcup V_{m} = V$ be a partition of some vertex set $V$. 
We use $\mathcal{G}(V_1,\ldots,V_{m})$ to denote the $r$-graph obtained by replacing vertex $i$ in $\mathcal{G}$ with the set $V_i$ for $i\in [m]$, and by replacing each edge in $\mathcal{G}$ with a corresponding complete $r$-partite $r$-graph. We call $\mathcal{G}(V_1,\ldots,V_{m})$ a \textbf{blowup} of $\mathcal{G}$. 
\begin{lemma}\label{LEMMA:greedily-embedding-Gi}
Fix a real $\eta \in (0, 1)$ and integers $m, n\ge 1$.
Let $\mathcal{G}$ be an $r$-graph with vertex set~$[m]$ and let $\mathcal{H}$ be a further $r$-graph
with $v(\mathcal{H})=n$.
Consider a vertex partition $V(\mathcal{H}) = \bigcup_{i\in[m]}V_i$ and the associated
blow-up $\widehat{\mathcal{G}} := \mathcal{G}(V_1,\ldots,V_{m})$ of $\mathcal{G}$.
Suppose that two sets $T \subseteq [m]$ and $S\subseteq V(\mathcal{H})$
have the properties
\begin{enumerate}[label=(\alph*)]
\item\label{it:47a} $|V_{j}'| \ge 2q(|S|+1)|T|\eta^{1/r} n$  for all $j \in T$,
\item\label{it:47b} $|\mathcal{H}[V_{j_1}',\ldots,V_{j_r}']| \ge |\widehat{\mathcal{G}}[V_{j_1}',\ldots,V_{j_r}']|
		- \eta n^r$ for all $\{j_1,\ldots,j_r\} \in \binom{T}{r}$, and
\item\label{it:47c} $|L_{\mathcal{H}}(v)[V_{j_1}',\ldots,V_{j_{r-1}}']| \ge |L_{\widehat{\mathcal{G}}}(v)[V_{j_1}',\ldots,V_{j_{r-1}}']|
		- \eta n^{r-1}$ for all $v\in S$ and for all $\{j_1,\ldots,j_{r-1}\} \in \binom{T}{r-1}$,
\end{enumerate}
where $V_i' := V_i\setminus S$ for $i\in [m]$. 
Then there exists a selection of $q$-set $U_j \subseteq V_j$ for all $j\in [T]$
such that $U := \bigcup_{j\in T}U_j$ satisfies
$\widehat{\mathcal{G}}[U] \subseteq \mathcal{H}[U]$ and
$L_{\widehat{\mathcal{G}}}(v)[U] \subseteq L_{\mathcal{H}}(v)[U]$ for all $v\in S$.
In particular, if $\mathcal{H} \subseteq \widehat{\mathcal{G}}$,
then $\widehat{\mathcal{G}}[U] = \mathcal{H}[U]$ and
$L_{\widehat{\mathcal{G}}}(v)[U] = L_{\mathcal{H}}(v)[U]$ for all $v\in S$.
\end{lemma}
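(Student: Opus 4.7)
The plan is to apply the probabilistic method: sample each $U_j$ independently and uniformly at random among the $q$-subsets of $V_j'$ for $j \in T$, and then show via a first-moment computation that with positive probability no ``bad'' blow-up edge or link tuple is captured. The exponent $1/r$ in condition~(a) is tuned so that the missing-edge budget $\eta n^r$ of condition~(b) is exactly cancelled by the $r$-fold capture probability $\prod_{i} q/|V_{j_i}'|$, producing a dimensionless bound independent of $n$ and $\eta$.

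Concretely, I will bound two expectations. Call an $r$-tuple $(u_1,\dots,u_r) \in V_{j_1}' \times \cdots \times V_{j_r}'$ with $\{j_1,\dots,j_r\} \in \mathcal{G}[T]$ \emph{bad} if $\{u_1,\dots,u_r\} \notin \mathcal{H}$; by condition~(b) each such edge of $\mathcal{G}[T]$ contributes at most $\eta n^r$ bad $r$-tuples. Each is captured by $U$ with probability $\prod_{i=1}^{r} q/|V_{j_i}'| \le (2(|S|+1)|T|\eta^{1/r}n)^{-r}$ using condition~(a). Summing over at most $\binom{|T|}{r}$ edges gives an expected number of captured bad $r$-tuples at most
\[
\binom{|T|}{r}\cdot \eta n^r \cdot (2(|S|+1)|T|\eta^{1/r}n)^{-r} \;\le\; \frac{1}{r!\,2^r(|S|+1)^r}.
\]
An analogous calculation using condition~(c), summed over all $v \in S$ and $\{j_1,\dots,j_{r-1}\} \in \binom{T}{r-1}$, bounds the expected number of captured bad link $(r-1)$-tuples by $|S|/(|S|+1)^{r-1}\cdot \eta^{1/r}/((r-1)!\,2^{r-1}) \le \eta^{1/r}/((r-1)!\,2^{r-1})$.

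Both expectations sum to a quantity strictly less than $1$ for $r \ge 2$ and any $\eta \in (0,1)$, so the first-moment method yields a realization $U = \bigcup_{j\in T} U_j$ avoiding every bad event. The absence of captured bad $r$-tuples gives $\widehat{\mathcal{G}}[U] \subseteq \mathcal{H}[U]$, and the absence of captured bad link tuples gives $L_{\widehat{\mathcal{G}}}(v)[U] \subseteq L_{\mathcal{H}}(v)[U]$ for every $v \in S$. The ``In particular'' clause is immediate, since $\mathcal{H} \subseteq \widehat{\mathcal{G}}$ automatically forces the reverse inclusions. I do not anticipate any serious obstacle; the only delicate point is careful bookkeeping of the factors $q$, $|T|$, and $|S|+1$ so that both expectations simultaneously fit under $1$, which is precisely why condition~(a) carries the factor $2q(|S|+1)|T|$.
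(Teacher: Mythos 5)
Your proposal is correct and follows essentially the same approach as the paper: both sample each $U_j$ uniformly at random among $q$-subsets of $V_j'$ and show the expected number of captured bad tuples (equivalently, the union-bound failure probability) is less than~$1$, using condition (a) to bound each capture probability by $(2(|S|+1)|T|\eta^{1/r}n)^{-r}$ and conditions (b), (c) to bound the number of bad tuples. The only cosmetic difference is that the paper phrases the calculation as a union bound over events $\mathcal{H}[U_{j_1},\ldots,U_{j_r}]\neq\widehat{\mathcal{G}}[U_{j_1},\ldots,U_{j_r}]$ (after first trimming each $V_j'$ to a common size), whereas you sum the first moment over individual bad tuples directly.
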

\begin{proof}   
    For $i\ge 2$ we use $K_{q,\ldots,q}^{i}$ to denote the complete $i$-partite $i$-graph with each part of size $q$. 
    We use the notation $N(F,G)$ to represent the number of copies of $F$ in $G$. 
    By shrinking $V_j'$ if necessary, we may assume that $|V_j'| = n_1:= q(|S|+1)|T|\eta^{1/r} n$. 
    Choose for each $j\in T$ a $q$-set $U_{j} \subseteq V_j'$ independently and uniformly at random. 
    Let $U:= \bigcup_{j\in T}U_j$. 
    For every $\{j_1, \ldots, j_{r}\} \in \mathcal{G}$, 
    let $\mathbb{P}_{j_1, \ldots, j_{r}}$ denote the probablity that $\mathcal{H}[U_{j_1}, \ldots, U_{j_r}]\neq\widehat{\mathcal{G}}[U_{j_1}, \ldots, U_{j_r}]$. 
    Then it follows from Assumption~\ref{it:47b} that 
    \begin{align*}
        \mathbb{P}_{j_1, \ldots, j_{r}}
         = 1- \frac{N\left(K_{q,\ldots,q}^{r}, \mathcal{H}[V'_{j_1}, \ldots, V'_{j_r}]\right)}{N\left(K_{q,\ldots,q}^{r}, \widehat{\mathcal{G}}[V'_{j_1}, \ldots, V'_{j_r}]\right)} 
        & \le {\eta n^r \left\{\binom{n_1-1}{q-1}\right\}^r}/{\left\{\binom{n_1}{q}\right\}^r} \\
        & = \eta n^r \left(\frac{q}{2q(|S|+1)|T|\eta^{1/r} n}\right)^r
         \le \frac{1}{2|T|^r}. 
    \end{align*}
    Here the numerator comes from the fact that each non-edge of $\mathcal{H}[V'_{j_1}, \ldots, V'_{j_r}]$ is contained in $\left\{\binom{n_1-1}{q-1}\right\}^r$ copies of $K_{q,\ldots,q}^{r}$ in $\widehat{\mathcal{G}}[V'_{j_1}, \ldots, V'_{j_r}]$. 
    
    For every $v\in S$ and $\{j_1, \ldots, j_{r-1}\} \in L_{\mathcal{G}}(v)$, let $\mathbb{P}_{v; j_1, \ldots, j_{r-1}}$ denote the probablity that $L_{\mathcal{H}}(v)[U_{j_1}, \ldots, U_{j_{r-1}}]\neq L_{\widehat{\mathcal{G}}}(v)[U_{j_1}, \ldots, U_{j_{r-1}}]$. 
    Then it follows from Assumption~\ref{it:47c} that 
    \begin{align*}
        \mathbb{P}_{v; j_1, \ldots, j_{r-1}}
         = 1- \frac{N\left(K_{q,\ldots,q}^{r-1}, L_{\mathcal{H}}[V'_{j_1}, \ldots, V'_{j_{r-1}}]\right)}{N\left(K_{q,\ldots,q}^{r-1}, L_{\widehat{\mathcal{G}}}[V'_{j_1}, \ldots, V'_{j_{r-1}}]\right)} 
        & \le {\eta n^{r-1} \left\{\binom{n_1-1}{q-1}\right\}^{r-1}}/{\left\{\binom{n_1}{q}\right\}^{r-1}} \\
        & = \eta n^{r-1} \left(\frac{q}{2q(|S|+1)|T|\eta^{1/r} n}\right)^{r-1} \\
        & \le \frac{1}{2(|S|+1)|T|^{r-1}}. 
    \end{align*}
    Therefore, the probability that $U$ fails to have the desired properties is at most 
    \begin{align*}
        \binom{|T|}{r}\times \frac{1}{2|T|^r}
        + |S|\binom{|T|}{r-1} \times  \frac{1}{2(|S|+1)|T|^{r-1}}
        \le \frac{1}{4} + \frac{1}{2}
        = \frac{3}{4}. 
    \end{align*}
    So the probability that $U$ has these properties is positive. 
\end{proof}
Another ingredient that we need is the following stability result for $H_{\ell+1}^{r}[t]$-free $r$-graphs. 
\begin{lemma}\label{LEMMA:stability-expansion-clique}
    Let $\ell \ge r \ge 3$ and $t \ge 1$ be fixed integers. 
    For every $\epsilon>0$ there exist $\delta >0$ and $n_0$ such that the following holds for all $n \ge n_0$. 
    Suppose that $\mathcal{H}$ is a $H_{\ell+1}^{r}[t]$-free $r$-graph with $n \ge n_0$ vertices and at least $t_{r}(n,\ell) - \delta n^r$ edges. 
    Then $\mathcal{H}$ can be made $\ell$-partite by removing at most $\epsilon n^r$ edges.  
\end{lemma}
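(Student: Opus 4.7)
The plan is a two-step reduction to Mubayi's stability theorem for $H_{\ell+1}^{r}$-free $r$-graphs. First, I would argue that any $H_{\ell+1}^{r}[t]$-free $r$-graph $\mathcal{H}$ on $n$ vertices contains only $o(n^{v(H_{\ell+1}^{r})})$ labeled copies of $H_{\ell+1}^{r}$; otherwise, a standard supersaturation-for-blowups argument would produce a copy of the $t$-blowup $H_{\ell+1}^{r}[t]$. Combining this copy-count bound with the hypergraph removal lemma (as invoked in the proof of Lemma~\ref{LEMMA:ESS-removal}), one can delete $o(n^r)$ edges from $\mathcal{H}$ to obtain a genuinely $H_{\ell+1}^{r}$-free subgraph $\mathcal{H}'$ satisfying $|\mathcal{H}'| \ge t_r(n,\ell) - \delta n^r - o(n^r)$.

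Second, I would invoke Mubayi's stability theorem for $H_{\ell+1}^{r}$-free hypergraphs, namely the stability result that Pikhurko leveraged in~\cite{Pik13} to determine $\mathrm{ex}(n, H_{\ell+1}^{r})$ exactly. Provided $\delta$ is chosen small enough relative to $\epsilon$, this theorem allows $\mathcal{H}'$ to be made $\ell$-partite by removing a further $\epsilon n^r/2$ edges. Aggregating both deletion phases, $\mathcal{H}$ itself becomes $\ell$-partite after deleting at most $\epsilon n^r$ edges, which is precisely the desired conclusion.

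The main obstacle is the blowup supersaturation step. The cleanest route I see is to locate a partition $V_1 \sqcup \cdots \sqcup V_{v(H_{\ell+1}^{r})}$ of $V(\mathcal{H})$ that supports $\Omega(n^{v(H_{\ell+1}^{r})})$ ordered copies of $H_{\ell+1}^{r}$ (such a partition exists by an averaging argument over vertex-labelings), and then feed it into Lemma~\ref{LEMMA:greedily-embedding-Gi} with $\mathcal{G} = H_{\ell+1}^{r}$, $q = t$, $S = \emptyset$, and a sufficiently small $\eta$. After a mild cleaning that discards atypical parts, the density condition of that lemma holds on the edge-tuples prescribed by $H_{\ell+1}^{r}$, and the random $t$-set selection directly yields the forbidden blowup $H_{\ell+1}^{r}[t]$, contradicting our assumption. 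Alternative arguments via iterating the hypergraph Kővári–Sós–Turán bound on vertex links also suffice, but the Lemma~\ref{LEMMA:greedily-embedding-Gi} approach meshes best with the rest of the paper.
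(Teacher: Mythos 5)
Your overall plan coincides with the paper's own (one-line) proof: since $\mathcal{H}$ is $H_{\ell+1}^{r}[t]$-free, supersaturation for blowups shows it contains only $o\bigl(n^{v(H_{\ell+1}^{r})}\bigr)$ copies of $H_{\ell+1}^{r}$, the Hypergraph Removal Lemma then deletes $o(n^r)$ edges to make it $H_{\ell+1}^{r}$-free, and Mubayi's stability theorem finishes the job; so the first two paragraphs are exactly the intended argument. One caveat on your preferred implementation of the supersaturation step: Lemma~\ref{LEMMA:greedily-embedding-Gi} is not the right tool there, because its hypothesis~\ref{it:47b} demands that the $r$-partite subgraphs between the parts prescribed by edges of $\mathcal{G}$ be within $\eta n^r$ of \emph{complete}, and a partition merely supporting $\Omega\bigl(n^{v(H_{\ell+1}^{r})}\bigr)$ ordered copies of $H_{\ell+1}^{r}$ gives only positive partite density, far from this near-completeness; no "mild cleaning" recovers it in general. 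The standard route — apply the Erd\H{o}s--Simonovits supersaturation/K\H{o}vari--S\'{o}s--Tur\'{a}n argument to the auxiliary $v(H_{\ell+1}^{r})$-uniform hypergraph whose edges are the vertex sets of copies of $H_{\ell+1}^{r}$, extracting a complete multipartite structure with parts of size $t$ and hence a copy of $H_{\ell+1}^{r}[t]$ — is the correct way to justify that step, and is essentially the alternative you mention, so the proof goes through once you use that version.
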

\begin{proof}
    This lemma follows easily from the Hypergraph Removal Lemma and the stability theorem of Mubayi on $H_{\ell+1}^{r}$-free $r$-graphs~{\cite[Theorem~3]{M06}}. 
\end{proof}

Now we are ready to prove Theorem~\ref{THM:antiRamsey-expansion-edge-criticalr=3}. 
\begin{proof}[Proof of Theorem~\ref{THM:antiRamsey-expansion-edge-criticalr=3}]
Fix integers $t > \ell \ge 3$. 
The lower bound for the case $F = \ K_{\ell}^{\gamma}[t]$ follows the well-known fact
\begin{align*}
    \mathrm{ar}(n,F) \ge \mathrm{ex}(n,F_{-}) +2 
    \quad\text{holds for all $n \ge 1$ and for all $r$-graphs $F$}.
\end{align*}
The lower bound for the case $F \in \left\{K_{\ell}^{\alpha}[t], K_{\ell}^{\beta}[t] \right\}$ follows from the following construction.  

Fix $n \ge \ell \ge 3$. Let $m := t_{3}(n,\ell)+\ell$. 
Let $V_1 \sqcup \cdots \sqcup V_{\ell} = [n]$ be a partition such that $|V_{\ell}|+1 \ge |V_1| \ge |V_2| \ge \cdots \ge |V_{\ell}|$. 
\begin{itemize}
    \item For $i\in [\ell]$ color all triples that contain at least two vertices from $V_i$ using color $i$, and  
    \item fix an arbitrary rainbow coloring for the rest $t_{3}(n,\ell)$ triples using colors that were not used in the previous step. 
\end{itemize}
%
We use an argument similar to the proof of~{\cite[Claim~22.2]{LP22}} to show that there is no rainbow copy of $H_{F}^{3}$ for $F \in \left\{K_{\ell}^{\alpha}[t], K_{\ell}^{\beta}[t] \right\}$ under the coloring defined above (denoted by $\chi$). 
Suppose to the contrary that there exists an embedding $\psi \colon V(H_{F}^{3}) \to [n]$ of $H_{F}^{3}$ to $K_{n}^{3}$ such that $\psi(H_{F}^{3})$ is rainbow under $\chi$. 
For simplicity, let us assume that $F = K_{\ell}^{\alpha}[t]$; the proof for the case $K_{\ell}^{\beta}[t]$ is similar. 
Let $U_1 \cup \cdots \cup U_{\ell} = V(F)$ be the partition such that $F[U_i]$ is empty for $i \in [2,\ell]$, and $F[U_1]$ consists of a path of length two and isolated vertices (see Figure~\ref{fig:edge-critical}~(a)).  
By the Pigeonhole Principle, for each $i \in [\ell]$, we can fix an index $f(i) \in [\ell]$ such that 
\begin{align*}
    |U_i \cap V_{f(i)}| 
    \ge \lceil t/\ell \rceil
    \ge 2.
\end{align*}
\begin{claim}\label{CLAIM:lower-bound-a}
    For every $i \in [\ell]$, we have 
    \begin{align*}
        V_{f(i)} \cap \psi(U_j) 
        = \emptyset 
        \quad\text{for every}~j \in [\ell] \setminus \{i\}.
    \end{align*}
\end{claim}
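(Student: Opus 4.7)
The plan is to derive a contradiction by exhibiting two distinct triples of $\psi(H_{F}^{3})$ that must receive the same color under $\chi$. Suppose for contradiction that some $v \in V_{f(i)} \cap \psi(U_j)$ exists for indices $i \neq j$ in $[\ell]$. By the defining property of $f(i)$, the set $\psi(U_i) \cap V_{f(i)}$ has size at least $\lceil t/\ell \rceil \ge 2$, so we may pick two distinct vertices $w_1, w_2 \in \psi(U_i) \cap V_{f(i)}$.

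Pulling back through $\psi$, set $\tilde{v} := \psi^{-1}(v) \in U_j$ and $\tilde{w}_1, \tilde{w}_2 := \psi^{-1}(w_1), \psi^{-1}(w_2) \in U_i$. Since $i \neq j$ and $F = K_{\ell}^{\alpha}[t]$ contains $K_{\ell}[t]$, both pairs $\{\tilde{v},\tilde{w}_1\}$ and $\{\tilde{v},\tilde{w}_2\}$ are edges of $F$ (they lie across two distinct parts of the blow-up). In the expansion $H_{F}^{3}$ these two distinct edges of $F$ become two triples $E_s := \{v, w_s, x_s\}$ for $s \in \{1,2\}$, where $x_1, x_2$ are the fresh expansion vertices attached to these two edges. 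Because expansion vertices attached to different edges of $F$ are distinct, $x_1 \neq x_2$, so $E_1 \neq E_2$.

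Now $v, w_1, w_2$ all lie in $V_{f(i)}$, so each of $E_1$ and $E_2$ contains at least two vertices from $V_{f(i)}$. By the first step of the construction of $\chi$, both triples $E_1$ and $E_2$ therefore receive color $f(i)$. This contradicts the assumption that $\psi(H_{F}^{3})$ is rainbow, proving the claim.

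The proof is essentially a direct Pigeonhole-plus-coloring argument, so the only subtle point is keeping track of which vertices in $F$ versus $[n]$ are being referred to and verifying that the two triples constructed are genuinely distinct; the latter is ensured by the fact that disjoint edges of $F$ acquire disjoint sets of new vertices under expansion.
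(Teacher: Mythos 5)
Your proof is correct and follows essentially the same route as the paper: you pick the vertex $v\in V_{f(i)}\cap\psi(U_j)$ and two vertices of $\psi(U_i)\cap V_{f(i)}$ (guaranteed by the pigeonhole choice of $f$), take the two expansion triples over the two crossing edges of $F$, and observe that both triples meet $V_{f(i)}$ in at least two vertices and hence both receive color $f(i)$, contradicting rainbowness. The only cosmetic difference is that you make explicit the distinctness of the two triples via the disjoint expansion vertices, which the paper leaves implicit.
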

\begin{proof}
    Suppose to the contrary that there exist distinct $i,j \in [\ell]$ such that $V_{f(i)} \cap \psi(U_j) \neq \emptyset$.
    By symmetry, we may assume that $(i,j) = (1,2)$ and $f(1) = 1$.
    Fix two distinct vertices $u_1, u_2 \in U_1$ such that $\psi(u_1), \psi(u_2) \in V_1$, and fix a vertex $u \in U_2$ such that $\psi(u) \in V_{1} \cap \psi(U_2)$. 
    Let $e_1$ and $e_2$ be two edges in $H_{F}^{3}$ containing $\{u_1, u\}$ and $\{u_2, u\}$, respectively. 
    Since $\psi(H_{F}^{3})$ is rainbow under the coloring $\chi$, we have $\chi(\psi(e_1)) \neq \chi(\psi(e_2))$. 
    However, since $\min\{|\psi(e_1) \cap V_1|, |\psi(e_2) \cap V_1|\} \ge 2$, by the definition of $\chi$, we have $\chi(\psi(e_1)) = \chi(\psi(e_2)) = 1$, a contradiction. 
\end{proof}

It follows from Claim~\ref{CLAIM:lower-bound-a} that the map $f\colon [\ell] \to [\ell]$ is a bijection, and furthermore, $\psi(U_i) \subseteq  V_{f(i)}$ for every $i \in [\ell]$. 

Now, let $\{w_1w_2, w_2w_3\} \subseteq  F[U_1]$ denote the path of length two.
Let $\hat{e}_1$ and $\hat{e}_2$ be two edges in $H_{F}^{3}$ containing $\{w_1, w_2\}$ and $\{w_2, w_3\}$, respectively.
Similar to the proof of Claim~\ref{CLAIM:lower-bound-a}, since $\min\{|\psi(\hat{e}_1) \cap V_{f(1)}|, |\psi(\hat{e}_2) \cap V_{f(1)}|\} \ge 2$, we have $\chi(\psi(\hat{e}_1)) = \chi(\psi(\hat{e}_2)) = f(1)$, which means that $\psi(H_{F}^{3})$ cannot be rainbow under the coloring $\chi$.
Therefore, $\mathrm{ar}(n,H_{F}^{3}) \ge t_{3}(n,\ell)+\ell+1$.

Now we focus on the upper bound. 
Fix $F\in \left\{K_{\ell}^{\alpha}[t],\ K_{\ell}^{\beta}[t],\ K_{\ell}^{\gamma}[t]\right\}$. 
Let $\delta >0$ be sufficiently small and $n$ be a sufficiently large. 
Let $\chi\colon K_{n}^{3} \to \mathbb{N}$ be a rainbow $H_{F}^{3}$-free coloring, and let $\mathcal{H} \subseteq K_{n}^{3}$ be a maximum rainbow subgraph. 
Suppose to the contrary that 
\begin{align*}
        |\mathcal{H}|
            \ge 
            \begin{cases}
                t_{3}(n,\ell) + \ell+1, & \quad\text{if}\quad F \in \left\{K_{\ell}^{\alpha}[t],\ K_{\ell}^{\beta}[t]\right\}, \\
                t_{3}(n,\ell) + 2, & \quad\text{if}\quad F = K_{\ell}^{\gamma}[t].
            \end{cases}
\end{align*}
Let $F_1:= K_{\ell}^{+}[t]$, where $K_{\ell}^{+}[t]$ denote the graph obtained from $K_{\ell}[t]$ by adding one edge into some part.
Note that $F_1 \in F_{-}$. 
Let $\mathcal{H}' \subseteq \mathcal{H}$ be a maximum $H_{F_1}^{3}$-free subgraph. It follows from Lemma~\ref{LEMMA:ESS-removal} that 
\begin{align}\label{equ:edge-critical-H'r=3}
    |\mathcal{H}'| 
    \ge |\mathcal{H}| -  \frac{\delta n^3}{100}  
    \ge \binom{\ell}{3}\left(\frac{n}{\ell}\right)^3 - \frac{\delta n^3}{50}. 
\end{align}
Let $V_1 \sqcup \cdots \sqcup V_{\ell} = [n]$ be partition such that the number of edges in the induced $\ell$-partite subgraph\footnote{Here, $\mathcal{H}[V_1, \ldots, V_{\ell}]$ consists of all edges in $\mathcal{H}$ that contain at most one vertex from each $V_i$.} $\mathcal{H}'' := \mathcal{H}[V_1, \ldots, V_{\ell}]$ is maximized. 
Since $H_{F_1}^3$ is contained in $H_{\ell+1}^{3}[t]$, it follows from~\eqref{equ:edge-critical-H'r=3} and  Lemma~\ref{LEMMA:stability-expansion-clique} that 
\begin{align}\label{equ:edge-critical-H''r=3}
    |\mathcal{H}''| 
    \ge |\mathcal{H}'| - \frac{\delta n^3}{25}
    \ge \binom{\ell}{3}\left(\frac{n}{\ell}\right)^3 - \frac{\delta n^3}{18}. 
\end{align}
Combined with the inequality (see~{\cite[Lemma~2.2]{LMR2}})
    \begin{align*}
        |\mathcal{H}''|
        \le \sum_{1\le i < j < k \le \ell}|V_i||V_j||V_k|
        \le \binom{\ell}{3}\left(\frac{n}{\ell}\right)^3 - \frac{\ell-2}{6\ell}\sum_{i\in [\ell]}\left(|V_i|- \frac{n}{\ell}\right)^2n, 
    \end{align*}
    we obtain 
    \begin{align}\label{equ:edge-critical-Vi-sizer=3}
        \frac{n}{\ell} - \sqrt{\delta}n 
        \le |V_i|
        \le \frac{n}{\ell} + \sqrt{\delta}n
        \quad\text{for all}\quad i\in [\ell]. 
    \end{align}
Let $\mathcal{G}$ denote the complete $r$-graph with $\ell$ vertices, and let $\widehat{\mathcal{G}}:= \mathcal{G}(V_1, \ldots, V_{\ell})$ denote the complete $\ell$-partite $r$-graph\footnote{In other words, $\widehat{\mathcal{G}}$ consists of all $r$-subset of $[n]$ that contain at most one vertex from each $V_i$.} with parts $V_1, \ldots, V_{\ell}$. 
Let 
\begin{align*}
    \mathcal{B} := \mathcal{H} \setminus \widehat{\mathcal{G}},
    \quad\text{and}\quad
    \mathcal{M} := \widehat{\mathcal{G}} \setminus \mathcal{H} = \widehat{\mathcal{G}} \setminus \mathcal{H}''. 
\end{align*}
It follows from~\eqref{equ:edge-critical-H''r=3} that 
\begin{align}\label{equ:edge-critical-Mr=3}
    |\mathcal{M}|
    = |\widehat{\mathcal{G}}| - |\mathcal{H}''|
    \le \binom{\ell}{3}\left(\frac{n}{\ell}\right)^3 - \left(\binom{\ell}{3}\left(\frac{n}{\ell}\right)^3 - \frac{\delta n^3}{18}\right)
    \le \delta n^3. 
\end{align}
For $i\in [\ell]$ let 
\begin{align*}
    D_i := \left\{v\in V_i \colon |L_{\widehat{\mathcal{G}}}(v)| - |L_{\mathcal{H}''}(v)| \le 3\delta^{1/3} n^2\right\}, 
    \quad\text{and}\quad
    \overline{D}_i := V_i \setminus D_i. 
\end{align*}
For convenience, let $D:= \bigcup_{i\in [\ell]} D_i$ and
$\overline{D} := \bigcup_{i\in [\ell]} \overline{D}_i$. 
\begin{claim}\label{CLAIM:edge-critical-missing-edgesr=3}
    We have $|\mathcal{M}| \ge \delta^{1/3} n^2 |\overline{D}|$ and $|\overline{D}| \le \delta^{2/3} n$. 
\end{claim}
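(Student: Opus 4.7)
The plan is to prove both inequalities by a straightforward double counting argument using the constraint $|\mathcal{M}| \le \delta n^3$ from~\eqref{equ:edge-critical-Mr=3}.

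First I would write down the basic link-sum identity
\begin{align*}
    \sum_{v \in [n]} \left(|L_{\widehat{\mathcal{G}}}(v)| - |L_{\mathcal{H}''}(v)|\right) = 3|\mathcal{M}|,
\end{align*}
which holds because each edge of $\mathcal{M} = \widehat{\mathcal{G}}\setminus \mathcal{H}''$ contributes exactly $3$ to the left-hand side (one per vertex it contains). Since every summand on the left is nonnegative, restricting the sum to $\overline{D} = \bigcup_{i\in[\ell]}\overline{D}_i$ can only decrease it. By the definition of $\overline{D}_i$, each $v\in \overline{D}$ satisfies $|L_{\widehat{\mathcal{G}}}(v)| - |L_{\mathcal{H}''}(v)| > 3\delta^{1/3} n^2$, so
\begin{align*}
    3|\mathcal{M}| \;\ge\; \sum_{v\in \overline{D}} \left(|L_{\widehat{\mathcal{G}}}(v)| - |L_{\mathcal{H}''}(v)|\right) \;>\; 3\delta^{1/3} n^2 |\overline{D}|,
\end{align*}
which yields the first inequality $|\mathcal{M}| \ge \delta^{1/3} n^2 |\overline{D}|$.

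For the second inequality I would just combine the first bound with~\eqref{equ:edge-critical-Mr=3}: since $|\mathcal{M}| \le \delta n^3$ and $|\mathcal{M}| \ge \delta^{1/3} n^2 |\overline{D}|$, rearranging gives $|\overline{D}| \le \delta^{2/3} n$. There is no real obstacle here — the claim is essentially a bookkeeping step that packages the fact that few edges can be missing into the statement that few vertices in the $V_i$'s can have substantially deficient links. The only mild subtlety is being careful that the link of $v\in V_i$ in $\mathcal{H}''$ really agrees with its link in $\widehat{\mathcal{G}}$ minus the contribution from $\mathcal{M}$, which is immediate from $\mathcal{M} = \widehat{\mathcal{G}}\setminus \mathcal{H}''$.
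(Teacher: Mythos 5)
Your proof is correct and is essentially the same double-counting argument as the paper's: the paper notes that each vertex of $\overline{D}$ accounts for more than $3\delta^{1/3}n^2$ edges of $\mathcal{M}$ and each such edge is counted at most three times, which is exactly your link-sum identity $\sum_{v}\bigl(|L_{\widehat{\mathcal{G}}}(v)|-|L_{\mathcal{H}''}(v)|\bigr)=3|\mathcal{M}|$ in disguise, and the second inequality follows in both cases by combining with the bound $|\mathcal{M}|\le \delta n^3$ from~\eqref{equ:edge-critical-Mr=3}.
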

\begin{proof}
    It follows from the definition that every vertex in $\overline{D}$ contributes at least $3\delta^{1/3} n^2$ elements in $\mathcal{M}$. 
    Therefore,  
    \begin{align*}
        |\mathcal{M}| 
        \ge \frac{1}{3} \times |\overline{D}| \times 3\delta^{1/3} n^2
        = \delta^{1/3} n^2 |\overline{D}|.  
    \end{align*}
    Combined with~\eqref{equ:edge-critical-Mr=3}, we obtain $|\overline{D}| \le \delta n^3/(\delta^{1/3} n^2) = \delta^{2/3} n$, 
    which completes the proof of Claim~\ref{CLAIM:edge-critical-missing-edgesr=3}. 
\end{proof}
The most crucial part in the proof is the following claim. 
\begin{claim}\label{CLAIM:edge-critical-two-bad-edgesr=3}
    If $F\in \left\{K_{\ell}^{\alpha}[t],\ K_{\ell}^{\beta}[t]\right\}$. Then $\mathcal{B}$ does not contain two edges $e, e'$ such that 
    \begin{align}\label{equ:CLAIM:edge-critical-two-bad-edgesr=3-case-1}
           \min\left\{|e\cap D_i|,\ |e'\cap D_i|\right\} \ge 2
           \quad\text{holds for some }i\in [\ell]. 
       \end{align}
    If $F = K_{\ell}^{\gamma}[t]$, then 
    the set $\mathcal{B}$ does not contain two edges $e, e'$ such that 
       \begin{align}\label{equ:CLAIM:edge-critical-two-bad-edgesr=3}
           \max\left\{|e\cap D_i|\colon i\in [\ell]\right\} \ge 2
           \quad\text{and}\quad
           \max\left\{|e'\cap D_i| \colon i\in [\ell]\right\} \ge 2. 
       \end{align}
\end{claim}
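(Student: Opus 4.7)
The plan is to derive a contradiction by exhibiting a copy of $H_{F}^{3}$ lying entirely inside the rainbow subgraph $\mathcal H$; such a copy is automatically rainbow, contradicting the rainbow-$H_{F}^{3}$-freeness of $\chi$.

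The copy of $H_{F}^{3}$ is assembled in two layers. In the first layer the bad edges $e$ and $e'$ themselves play the role of the two ``extra'' $3$-edges of $H_{F}^{3}$, i.e.\ the $3$-edges coming from the two $2$-edges that $F$ adds to the blowup $K_{\ell}[t]$. The hypothesis lets me pick pairs $\{a_1,a_2\}\subseteq e$ and $\{a_3,a_4\}\subseteq e'$ inside the appropriate $D_i$'s, to serve as the endpoints of the two extra $2$-edges of $F$; the remaining vertex of each bad edge then acts as the expansion vertex of the corresponding extra $3$-edge. A short case analysis on whether $\{a_1,a_2\}$ and $\{a_3,a_4\}$ share a vertex (and on whether the indices coincide in the $\gamma$-case) matches these anchors to the incidence pattern required by $F$: sharing a vertex for $K_{\ell}^{\alpha}[t]$, disjoint and in the same part for $K_{\ell}^{\beta}[t]$, and in two different parts for $K_{\ell}^{\gamma}[t]$.

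In the second layer, the anchors are extended to a full $K_{\ell}[t]$-blowup core $U_1\sqcup\cdots\sqcup U_{\ell}$ (with $U_j\subseteq V_j$ of size $t$) by invoking Lemma~\ref{LEMMA:greedily-embedding-Gi}. Take $\mathcal G$ to be the complete $3$-graph on $[\ell]$, so that $\widehat{\mathcal G}=\mathcal G(V_1,\dots,V_{\ell})$; take $q=t$, $T=[\ell]$, and let $S$ be the set of anchor vertices drawn from $e\cup e'$. The size condition~\ref{it:47a} is provided by the balanced-parts estimate~\eqref{equ:edge-critical-Vi-sizer=3}, the global density condition~\ref{it:47b} by the bound $|\mathcal M|\le \delta n^3$ from~\eqref{equ:edge-critical-Mr=3}, and the link condition~\ref{it:47c} is exactly the defining property of $D_i$ together with the bound $|\overline D|\le\delta^{2/3}n$ from Claim~\ref{CLAIM:edge-critical-missing-edgesr=3}. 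The lemma then delivers $t$-sets $U_j$ containing the anchors, on which every cross-part triple lies in $\mathcal H''\subseteq\mathcal H$ and every link triple at each anchor lies in $\mathcal H$. For every graph edge of the blowup $K_{\ell}[t]$ inside $\bigcup_j U_j$, a distinct fresh expansion vertex with the corresponding triple in $\mathcal H$ is then picked greedily from $[n]\setminus\bigcup_j U_j$; since $|\mathcal M|\le\delta n^3$, at each step $(1-o(1))n$ vertices are admissible while only $O_{\ell}(t^2)$ are required, so the greedy selection succeeds. The assembled configuration is a copy of $H_{F}^{3}$ inside $\mathcal H$.

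The main obstacle is the first layer: verifying that the anchors chosen inside $e\cap D_i$ and $e'\cap D_{i'}$ can always be arranged to realize the specific incidence pattern demanded by the particular $F$ under consideration. This is a short but genuine case analysis (sharing versus disjoint pairs inside $D_i$; $i=i'$ versus $i\neq i'$), and is precisely the point where the three separate definitions of $K_{\ell}^{\alpha}$, $K_{\ell}^{\beta}$, $K_{\ell}^{\gamma}$ are matched to the combinatorial flexibility of $e,e'$. Once the anchors are placed correctly, the second layer is a routine application of Lemma~\ref{LEMMA:greedily-embedding-Gi} followed by a greedy completion.
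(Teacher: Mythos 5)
Your plan breaks down at the key step, and in a way that explains why the paper's proof looks different from what you might first guess.

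\textbf{The two ``bad'' edges cannot both serve as the extra edges of $H_F^3$.} Your first layer uses $e$ and $e'$ themselves as the two expanded $3$-edges corresponding to the extra $2$-edges that $F$ adds on top of $K_\ell[t]$, and you claim a ``short case analysis'' matches the incidence pattern of the anchor pairs to the pattern demanded by $F$. But the anchors are forced to come from $e\cap D_i$ and $e'\cap D_{i'}$, and $e,e'$ can be positioned arbitrarily. For $F=K_\ell^{\alpha}[t]$ you need a path of length two, i.e.\ the anchor pairs must share a vertex in the same part; if $e$ and $e'$ are vertex-disjoint (which the hypothesis allows), no such choice exists. For $F=K_\ell^{\beta}[t]$ you need two disjoint anchor pairs in the same part; if $e\cap D_i=e'\cap D_i$ is a fixed pair $\{a,b\}$ with $|e\cap D_i|=|e'\cap D_i|=2$, the only available pairs coincide. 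For $F=K_\ell^{\gamma}[t]$ you need anchor pairs in two \emph{different} parts; if $i_1=i_2$ this fails. There is no case analysis that rescues this. The actual proof never tries to use both $e$ and $e'$ as extra edges: it introduces a fresh triple $f\subseteq D_i$ of $K_n^3$ (not necessarily in $\mathcal H$) with exactly the incidence pattern the chosen $F$ requires, and the role of having \emph{two} bad edges $e,e'$ is a colour pigeonhole --- since $\chi(e)\neq\chi(e')$, at least one of $e,e'$ has colour different from $\chi(f)$, so that one (call it $e$) can be paired with $f$. The anchors of both $e$ and $e'$ are placed in $S$ only to keep this symmetry available when invoking Lemma~\ref{LEMMA:greedily-embedding-Gi}; in the final configuration only one of them is used, together with $f$. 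One then deletes the (at most one) edge of $\mathcal H$ carrying colour $\chi(f)$ before embedding so that $\{e,f\}\cup\mathcal H'[U]$ is rainbow. None of this mechanism appears in your proposal, and without it the claim does not follow.

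\textbf{The greedy completion is also not justified as written.} You propose picking the expansion vertices ``from $[n]\setminus\bigcup_j U_j$'' and assert that $|\mathcal M|\le\delta n^3$ gives $(1-o(1))n$ admissible choices per cross-pair. That bound is global; a single pair $\{u,v\}$ with $u,v\in D$ could still have co-degree $0$ in $\mathcal H$, since $3\delta^{1/3}n^2$ (the allowed link deficit for a $D$-vertex) vastly exceeds the number of pairs in $L_{\widehat{\mathcal G}}(u)$ through $v$, which is only $O(n)$. The paper avoids this by taking $q$ large enough ($q=2\binom{\ell}{2}t^2$, not $q=t$) and drawing expansion vertices from \emph{inside} $\bigcup U_j$, where Lemma~\ref{LEMMA:greedily-embedding-Gi} makes the cross-part restriction of $\mathcal H$ complete and hence guarantees a co-degree of at least $(\ell-2)q$ within $U$ for every pair of $K$. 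That constant lower bound is what the greedy step actually consumes.
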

\begin{proof}
    First consider the case $F = K_{\ell}^{\alpha}[t]$. 
    Suppose to the contrary that there exist two distinct edges $e, e' \in \mathcal{B}$ such that~\eqref{equ:CLAIM:edge-critical-two-bad-edgesr=3-case-1} holds. 
    By symmetry, we may assume that $\min\left\{|e\cap D_1|,\ |e'\cap D_1|\right\} \ge 2$. 
    Choose a $3$-set $f \subseteq D_1$ such that $|f\cap e| = |f\cap e'| = 1$. 
    By symmetry, we may assume that $\chi(f) \neq \chi(e)$. 
    Let $\{v_1\}:= f\cap e$. Fix $v_2 \in \left(e\cap D_1\right)\setminus \{v_1\}$ and $v_3 \in f\setminus\{v_1\}$. 
    Let $\mathcal{H}'$ be the $3$-graph obtained from $\mathcal{H}$ by removing an edge (if there exists such an edge) with color $\chi(f)$. 
    Then apply Lemma~\ref{LEMMA:greedily-embedding-Gi} to $\mathcal{H}'$ with $V_i' = D_i\setminus (e \cup f)$ for $i\in [\ell]$, $T = [\ell]$, $S = \{v_1, v_2, v_3\}$, and $q = 2\binom{\ell}{2}t^2$, we obtain a $q$-set $U_i\subseteq D_i\setminus (e \cup f)$ for each $i\in [\ell]$ such that 
    the induced $\ell$-partite $3$-graph of $\mathcal{H}'$ on $U_1, \ldots, U_{\ell}$ is complete, and for every $v\in \{v_1, v_2, v_3\}$ the induced $(\ell-1)$-partite graph of $L_{\mathcal{H}'}(v)$ on $U_2, \ldots, U_{\ell}$ is also complete. 
    Note that Lemma~\ref{LEMMA:greedily-embedding-Gi}~\ref{it:47a} is guaranteed by~\eqref{equ:edge-critical-Vi-sizer=3}, Lemma~\ref{LEMMA:greedily-embedding-Gi}~\ref{it:47b} is guaranteed by~\eqref{equ:edge-critical-Mr=3}, and Lemma~\ref{LEMMA:greedily-embedding-Gi}~\ref{it:47c} is guaranteed by the definition of $D_i$. 
    Let $U:= \{v_1, v_2, v_3\} \cup \bigcup_{i\in [\ell]}U_i$
    It is easy to see that the expansion of $K_{\ell}^{\alpha}[t]$ is contained in $\{e,f\} \cup \mathcal{H}'[U]$.  This is a contradiction, since $\{e,f\} \cup \mathcal{H}'[U]$ is rainbow. 

    The case $F = K_{\ell}^{\beta}[t]$ can be proved similarly by choosing a $3$-set $f \subseteq D_1$ such that $|f\cap e| = |f\cap e'| = 0$. So we omit the details here.

    Now we consider the case $F = K_{\ell}^{\gamma}[t]$. 
    Suppose to the contrary that there exist two distinct edges $e, e' \in \mathcal{B}$ such that~\eqref{equ:CLAIM:edge-critical-two-bad-edgesr=3} holds. 
    Let us assume that $|e\cap D_{i_1}|\ge 2$ and $|e'\cap D_{i_2}|\ge 2$, where $i_1, i_2 \in [\ell]$. 
    Fix $\{u_1,v_1\} \subseteq  e\cap D_{i_1}$ and $\{u_1',v_1'\} \subseteq  e'\cap D_{i_2}$. 
    Then apply Lemma~\ref{LEMMA:greedily-embedding-Gi} to $\mathcal{H}$ with $V_i' = D_i\setminus (e \cup f)$ for $i\in [\ell]$, $T = [\ell]$, $S = \{u_1, v_1, u_1', v_1'\}$, and $q = 2\binom{\ell}{2}t^2$, we obtain a $q$-set $U_i\subseteq D_i\setminus (e \cup f)$ for each $i\in [\ell]$ such that 
    \begin{itemize}
        \item the induced $\ell$-partite $3$-graph of $\mathcal{H}$ on $U_1, \ldots, U_{\ell}$ is complete,
        \item for every $v\in \{u_1, v_1\}$ the induced $(\ell-1)$-partite graph of $L_{\mathcal{H}}(v)$ on $\{U_i \colon i\in [\ell]\setminus\{i_1\}\}$ is complete,
        \item and for every $v\in \{u_1', v_1'\}$ the induced $(\ell-1)$-partite graph of $L_{\mathcal{H}}(v)$ on $\{U_i \colon i\in [\ell]\setminus\{i_2\}\}$ is complete. 
    \end{itemize}
    Choose $i^{\ast} \in [\ell]\setminus\{i_1,i_2\}$ and fix a $3$-set $f \subseteq D_{i^{\ast}}$ such that $|f\cap U_{i^{\ast}}| = 2$. 
    By symmetry, we may assume that $\chi(f) \neq \chi(e)$. 
    Let $\{u_2, v_2\} := f\cap U_{i^{\ast}}$.
    Fix a $t$-set $W_i \subseteq U_i$ for $i \in [\ell]\setminus\{i_1, i^{\ast}\}$,  
    fix a $t$-set $W_{i_1} \subseteq U_{i_1} \cup \{u_1, v_1\}$ with $\{u_1, v_1\} \subseteq W_{i_1}$, and fix a $t$-set $W_{i^{\ast}} \subseteq U_{i^{\ast}}$ with $\{u_2, v_2\} \subseteq W_{i^{\ast}}$. 
    Let $K$ denote the complete $\ell$-partite graph with parts $W_1, \ldots, W_{\ell}$. 
    Observe from the choice of $U_i$'s that every pair in $K$ is contained in at least $q = 2\binom{\ell}{2}t^2$ edges in $\mathcal{H}$. 
    Since $\mathcal{H}$ is rainbow, it is easy to greedily extend $K$ to be a rainbow copy of $H_{K}^{3}$ and avoid using the color $\chi(f)$. 
    However, this copy of $H_{K}^{3}$ together with edges $e$ and $f$ is a rainbow copy of $H_{F}^{3}$, contradicting the rainbow-$H_{F}^{3}$-freeness of $\chi$. 
\end{proof}
For $i\in \{0,1,2,3\}$ let 
    \begin{align*}
        \mathcal{B}_i := \left\{e\in \mathcal{B} \colon |e\cap \overline{D}| = i\right\}. 
    \end{align*}

\medskip 

\textbf{Case 1}: $F\in \left\{K_{\ell}^{\alpha}[t],\ K_{\ell}^{\beta}[t]\right\}$.

    For every triple $e\in \mathcal{B}_0 \cup \mathcal{B}_1$ we can fix a pair $e'\subseteq e\cap D_i$ for some $i\in [\ell]$. 
    It follows from Claim~\ref{CLAIM:edge-critical-two-bad-edgesr=3} that no two triples will share the same pair and no two pairs lie in the same part. 
    Therefore, $|\mathcal{B}_0| + |\mathcal{B}_1| \le \ell$.
    Combined with Claim~\ref{CLAIM:edge-critical-missing-edgesr=3} and the trivial bound $|\mathcal{B}_2| + |\mathcal{B}_3| \le n|\overline{D}|^2$, we obtain 
    \begin{align*}
        |\mathcal{H}| 
        \le |\widehat{\mathcal{G}}| + \sum_{i=0}^{3}|\mathcal{B}_i| - |\mathcal{M}|
        & \le t_{3}(n,\ell) + \ell + n|\overline{D}|^2 - \delta^{1/3} n^2 |\overline{D}| \\
        & \le t_{3}(n,\ell) + \ell - \left(\delta^{2/3} n - \delta^{1/3} n\right)n|\overline{D}|
        \le t_{3}(n,\ell) + \ell, 
    \end{align*}
    a contradiction. 

\medskip 

\textbf{Case 2}: $F = K_{\ell}^{\gamma}[t]$. 

    Similarly, for every triple $e\in \mathcal{B}_0 \cup \mathcal{B}_1$ we can fix a pair $e'\subseteq e\cap D_i$ for some $i\in [\ell]$. 
    It follows from Claim~\ref{CLAIM:edge-critical-two-bad-edgesr=3} that no two triples will share the same pair and the number of pairs is at most one. 
    Therefore, $|\mathcal{B}_0| + |\mathcal{B}_1| \le 1$.
    Similarly, by Claim~\ref{CLAIM:edge-critical-missing-edgesr=3} and the trivial bound $|\mathcal{B}_2| + |\mathcal{B}_3| \le n|\overline{D}|^2$, we have 
    \begin{align*}
        |\mathcal{H}| 
        \le |\widehat{\mathcal{G}}| + \sum_{i=0}^{3}|\mathcal{B}_i| - |\mathcal{M}|
        & \le t_{3}(n,\ell) + 1 + n|\overline{D}|^2 - \delta^{1/3} n^2 |\overline{D}| \\
        & \le t_{3}(n,\ell) + 1 - \left(\delta^{2/3} n - \delta^{1/3} n\right)n|\overline{D}|
        \le t_{3}(n,\ell) + 1, 
    \end{align*}
    a contradiction. 
\end{proof}

%
The proof for Theorem~\ref{THM:antiRamsey-expansion-edge-criticalr>3} is similar to the proof of Theorem~\ref{THM:antiRamsey-expansion-edge-criticalr=3}. So we omit the detail and only sketch the proof for the most crucial claim. 
\begin{claim}\label{CLAIM:edge-critical-two-bad-edgesr>3}
    The set $\mathcal{B}$ does not contain two edges $e, e'$ such that 
       \begin{align}\label{equ:CLAIM:edge-critical-two-bad-edgesr>3}
           \max\left\{|e\cap D_i|\colon i\in [\ell]\right\} \ge 2
           \quad\text{and}\quad
           \max\left\{|e'\cap D_i| \colon i\in [\ell]\right\} \ge 2. 
       \end{align}
\end{claim}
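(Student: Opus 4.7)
The plan is to adapt the $\gamma$-case argument from the proof of Claim~\ref{CLAIM:edge-critical-two-bad-edgesr=3} to general $r \ge 4$. The key observation is that for $r \ge 4$, each bad edge has at least $r - 2 \ge 2$ vertices beyond its bad pair, giving enough flexibility to handle all three $F \in \{K_{\ell}^{\alpha}[t], K_{\ell}^{\beta}[t], K_{\ell}^{\gamma}[t]\}$ with a single argument (consistent with the uniform $+2$ bound in Theorem~\ref{THM:antiRamsey-expansion-edge-criticalr>3}). Throughout, $F$ is fixed and the goal is to construct a rainbow copy of $H_{F}^{r}$ from the hypothetical $e, e' \in \mathcal{B}$, contradicting the rainbow-$H_{F}^{r}$-freeness of $\chi$.

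Assume $|e \cap D_{i_1}| \ge 2$ and $|e' \cap D_{i_2}| \ge 2$ for some $i_1, i_2 \in [\ell]$. Fix pairs $\{u_1, v_1\} \subseteq e \cap D_{i_1}$ and $\{u_1', v_1'\} \subseteq e' \cap D_{i_2}$. Apply Lemma~\ref{LEMMA:greedily-embedding-Gi} with $V_j' = D_j \setminus (e \cup e')$, $T = [\ell]$, $S = \{u_1, v_1, u_1', v_1'\}$, $q = 2\binom{\ell}{r}t^{r}$, and $\eta$ small. Hypotheses (a)--(c) follow from the $r \ge 4$ analogues of~\eqref{equ:edge-critical-Vi-sizer=3}, Claim~\ref{CLAIM:edge-critical-missing-edgesr=3}, the bound on the missing-edges set, and the definition of $D_j$, respectively. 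This produces $q$-sets $U_j \subseteq D_j \setminus (e \cup e')$ such that the induced $\ell$-partite $r$-graph of $\mathcal{H}$ on $\bigsqcup_{j} U_j$ is complete, and the link of each vertex in $S$ on the remaining $U_j$'s is a complete $(\ell-1)$-partite $(r-1)$-graph.

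Next, pick an auxiliary $r$-set $f$ tailored to $F$: for $F = K_{\ell}^{\gamma}[t]$, choose $f \subseteq D_{i^{\ast}}$ with $|f \cap U_{i^{\ast}}| = 2$ for some $i^{\ast} \in [\ell] \setminus \{i_1, i_2\}$ (such $i^{\ast}$ exists since $\ell \ge r \ge 4$); for $F = K_{\ell}^{\alpha}[t]$, choose $f \subseteq D_{i_1}$ with $|f \cap \{u_1, v_1\}| = 1$ and at least one further vertex in $U_{i_1}$; for $F = K_{\ell}^{\beta}[t]$, choose $f \subseteq D_{i_1}$ disjoint from $e$ with $|f \cap U_{i_1}| \ge 2$. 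Then select $t$-sets $W_j \subseteq U_j$, enlarged so that $W_{i_1} \supseteq \{u_1, v_1\}$ and the designated pair of $f$ lies in $W_{i^{\ast}}$ for $\gamma$ or in $W_{i_1}$ for $\alpha, \beta$; this realizes $e$ and $f$ as the expansions of the two extra graph-edges of $F$. The completeness of $\mathcal{H}[U_1, \ldots, U_{\ell}]$ and of the relevant links then allows a greedy rainbow extension of the $K_{\ell}[t]$-blowup on $\bigsqcup_{j} W_j$ to a rainbow copy of $H_{K_{\ell}[t]}^{r}$ inside $\mathcal{H}$, drawing expansion vertices from $U_j \setminus W_j$ and avoiding the color $\chi(f)$ throughout. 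Combined with $e$ and $f$, this yields a rainbow $H_{F}^{r}$.

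The principal obstacle is the color collision $\chi(f) = \chi(e)$: the rainbow-ness of the completed copy requires $\chi(f) \ne \chi(e)$, but $f$ is drawn from $K_{n}^{r}$ and the coloring $\chi$ could in principle force $\chi(f) = \chi(e)$ for every structurally valid choice of $f$. This is precisely where the second bad edge $e'$ matters: in that boundary situation, we swap the roles of $e$ and $e'$ (using $e'$ in place of $e$ as the extra-edge expansion rooted at the pair $\{u_1', v_1'\}$ in $V_{i_2}$) and redo the structural choice of $f$. Since $\chi(e) \ne \chi(e')$ (both edges lie in the rainbow subgraph $\mathcal{H}$), $\chi(f)$ cannot simultaneously equal both, so the swap produces a rainbow $H_{F}^{r}$ and the desired contradiction.
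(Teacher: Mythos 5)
Your treatment of the case $F = K_{\ell}^{\gamma}[t]$ matches the paper and is fine: there the single auxiliary edge $f \subseteq D_{i^{\ast}}$ with $i^{\ast}\notin\{i_1,i_2\}$ is structurally compatible with \emph{both} $e$ and $e'$, so the dichotomy $\chi(f)\neq\chi(e)$ or $\chi(f)\neq\chi(e')$ (valid since $\chi(e)\neq\chi(e')$) immediately yields a rainbow $H_{F}^{r}$. But for $F \in \{K_{\ell}^{\alpha}[t], K_{\ell}^{\beta}[t]\}$ your argument has a genuine gap. You choose $f \subseteq D_{i_1}$ tailored only to $e$ (sharing one vertex with $\{u_1,v_1\}$ for $\alpha$, disjoint from $e$ for $\beta$), and when $\chi(f)=\chi(e)$ you propose to ``swap to $e'$ and redo the structural choice of $f$.'' These two moves are incompatible: if you pick a \emph{new} $f'$ adapted to $e'$ (with its pair in $D_{i_2}$), nothing prevents $\chi(f')=\chi(e')$, and you are back where you started; the inequality $\chi(e)\neq\chi(e')$ only helps if \emph{one and the same} $f$ can be used with either $e$ or $e'$. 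If instead you keep your $f$, it is structurally useless with $e'$: its pair lies in part $i_1$ while the bad pair of $e'$ lies in part $i_2$ (possibly $i_2\neq i_1$), so the two added graph edges would sit in different parts (a $\gamma$-configuration, not $\alpha$ or $\beta$), and moreover your $f$ carries no intersection condition with $e'$ at all (for $\alpha$ one needs $|f\cap e'|=1$, for $\beta$ one needs $f\cap e'=\emptyset$) — this fails even when $i_1=i_2$.

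The paper closes exactly this gap, and this is the one place where $r\ge 4$ is really used: one chooses a \emph{single} $r$-set $f \subseteq D_{i_1}\cup D_{i_2}$ with $\min\{|f\cap D_{i_1}|,|f\cap D_{i_2}|\}\ge 2$ and with $|f\cap e|=|f\cap e'|=1$ in the $\alpha$ case (respectively $|f\cap e|=|f\cap e'|=0$ in the $\beta$ case); such an $f$ needs at least four vertices, which is why this works only for $r\ge 4$ and why the $r=3$ statement had to be restricted to two bad edges with pairs in the same part. With this straddling $f$, whichever of $e,e'$ has color different from $\chi(f)$ can be combined with $f$ — forming the required configuration inside part $i_1$ or inside part $i_2$, respectively — and the greedy rainbow completion via Lemma~\ref{LEMMA:greedily-embedding-Gi} then gives the contradiction. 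Your reduction of the claim to ``each bad edge has $r-2\ge 2$ spare vertices'' misses this point: the extra room is needed in the auxiliary edge $f$, not in $e$ and $e'$. The rest of your setup (the application of Lemma~\ref{LEMMA:greedily-embedding-Gi}, the choice of the $W_j$'s, and the greedy extension avoiding $\chi(f)$) is consistent with the paper.
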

\begin{proof}
    Suppose to the contrary that there exist two distinct edges $e, e' \in \mathcal{B}$ such that~\eqref{equ:CLAIM:edge-critical-two-bad-edgesr>3} holds. 
    Let us assume that $|e\cap D_{i_1}|\ge 2$ and $|e'\cap D_{i_2}|\ge 2$, where $i_1, i_2 \in [\ell]$.

    If $F = K_{\ell}^{\alpha}[t]$, then we choose an $r$-set $f \subseteq D_{i_1} \cup D_{i_2}$ such that $|f\cap e| = |f\cap e'| = 1$ and such that $\min\left\{|f\cap D_{i_1}|,\ |f\cap D_{i_2}|\right\} \ge 2$. Since $r\ge 4$, such $f$ exists. 

    If $F = K_{\ell}^{\beta}[t]$, then we choose an $r$-set $f \subseteq D_{i_1} \cup D_{i_2}$ such that $|f\cap e| = |f\cap e'| = 0$ and such that $\min\left\{|f\cap D_{i_1}|,\ |f\cap D_{i_2}|\right\} \ge 2$. Since $r\ge 4$, such $f$ exists. 

    The case $F = K_{\ell}^{\gamma}[t]$ can be handled using the same way as in the proof of Claim~\ref{CLAIM:edge-critical-two-bad-edgesr=3}. 

    The rest part is similar to the proof of Claim~\ref{CLAIM:edge-critical-two-bad-edgesr=3}, so we omit the details. 
\end{proof}

\section{Concluding remarks}\label{SEC:remark}
$\bullet$ Given an $r$-graph $F$ and an integer $1\le k < r$, we say an edge $e\in F$ is \textbf{$k$-pendant} if $e$ contains a $k$-subset $e'$ such that 
\begin{align*}
    e'\cap f = \emptyset
    \quad\text{for all}\quad f\in F\setminus \{e\}.
\end{align*}
For convenience, let $F_{k-}$ denote the family of $r$-graphs that can be obtained from $F$ by removing one $k$-pendant edge, i.e. 
\begin{align*}
    F_{k-} := \left\{F\setminus \{e\} \colon \text{$e\in F$ is $k$-pendant}\right\}. 
\end{align*}
The argument in the proof of Theorem~\ref{THM:antiRamsey-expansion-hypergraph-splitting} (see Claim~\ref{CLAIM:anti-Ramsey-splitting-no-multiset}) yields the following result. 
\begin{theorem}\label{THM:anti-Ramsey-pendant-edge}
    Let $r > k \ge 1$ be integers and $F$ be an $r$-graph. 
    Suppose that $\chi\colon K_{n}^{r} \to \mathbb{N}$ is a rainbow-$F$-free coloring. 
    Then every rainbow subgraph $\mathcal{H} \subseteq K_{n}^{r}$ can be made $F_{k-}$-free by removing at most $(|F|-1)\binom{n}{k}$ edges.
    In particular, for all integers $n \ge r$, 
    \begin{align*}
        \mathrm{ar}(n,F) \le \mathrm{ex}(n,F_{k-}) + (|F|-1) \binom{n}{k}. 
    \end{align*}
\end{theorem}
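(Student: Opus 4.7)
The plan is to mirror the proof of Theorem~\ref{THM:antiRamsey-expansion-hypergraph-splitting}, specifically the ``no multi-set'' step of Claim~\ref{CLAIM:anti-Ramsey-splitting-no-multiset}. Given a rainbow-$F$-free coloring $\chi\colon K_n^r\to\mathbb{N}$ and a rainbow subgraph $\mathcal{H}\subseteq K_n^r$, I would take a maximal collection $\mathcal{C}=\{Q_1,\ldots,Q_m\}$ of pairwise edge-disjoint copies in $\mathcal{H}$ of members of $F_{k-}$. By the maximality of $\mathcal{C}$, the subgraph $\mathcal{H}\setminus\bigcup_{i\in[m]} Q_i$ is $F_{k-}$-free, and since each $Q_i$ contains $|F|-1$ edges, the task reduces to controlling $m$.

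For each $i\in[m]$, fix an isomorphism identifying $Q_i$ with a copy of $F\setminus\{e_i\}$, where $e_i\in F$ is $k$-pendant, and pick a witnessing pendant $k$-subset $e_i'\subseteq e_i$ with $e_i'\cap f=\emptyset$ for every $f\in F\setminus\{e_i\}$. Under the isomorphism, the non-pendant part $e_i\setminus e_i'$ is sent to a set $A_i\subseteq V(Q_i)$ (of size at most $r-k$; equality holds when $e_i$ has exactly $k$ pendant vertices, and the general case proceeds in the same way). The key consequence of rainbow-$F$-freeness is the following: for every $k$-set $S\subseteq[n]\setminus V(Q_i)$, the $r$-edge $A_i\cup S$ together with $Q_i$ forms a copy of $F$ inside $K_n^r$ (with $S$ playing the role of $e_i'$), so $\chi(A_i\cup S)$ must lie in $\chi(Q_i):=\{\chi(f):f\in Q_i\}$ — otherwise $Q_i\cup\{A_i\cup S\}$ would be a rainbow copy of $F$, contradicting rainbow-$F$-freeness.

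The crucial step, the direct analogue of Claim~\ref{CLAIM:anti-Ramsey-splitting-no-multiset}, is that $A_i\neq A_j$ whenever $i\neq j$. Suppose for contradiction $A_i=A_j=:A$. Since $|V(Q_i)\cup V(Q_j)|\le 2v(F)$ is a constant independent of $n$, one may pick a $k$-set $S\subseteq[n]\setminus(V(Q_i)\cup V(Q_j))$; then $\chi(A\cup S)\in\chi(Q_i)\cap\chi(Q_j)$. But $Q_i,Q_j$ are edge-disjoint subgraphs of the rainbow $\mathcal{H}$, so $\chi(Q_i)\cap\chi(Q_j)=\emptyset$, the desired contradiction. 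Consequently $i\mapsto A_i$ is an injection into an ambient family of subsets of $[n]$, yielding the claimed bound on $m$ and hence on the number of edges removed. The ``in particular'' bound on $\mathrm{ar}(n,F)$ then follows from the standard fact that any rainbow-$F$-free coloring using $m$ colors produces a rainbow subgraph of $\mathcal{H}$ of size $m$.

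The one technical wrinkle I anticipate is uniformity in $n$: for $n$ close to $r$, the complement $[n]\setminus(V(Q_i)\cup V(Q_j))$ may be too small to contain a $k$-set. This is handled either by a direct check at the boundary (where $m$ is already very small) or by absorbing the loss into the generous ambient bound. Beyond this, no new ingredients beyond the argument of Claim~\ref{CLAIM:anti-Ramsey-splitting-no-multiset} are required.
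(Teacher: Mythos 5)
Your proposal is exactly the argument the paper has in mind: the paper offers no separate proof of this theorem, only the remark that the argument of Theorem~\ref{THM:antiRamsey-expansion-hypergraph-splitting} (specifically Claim~\ref{CLAIM:anti-Ramsey-splitting-no-multiset}) yields it, and your maximal edge-disjoint collection $\{Q_1,\ldots,Q_m\}$, the observation that $\chi(A_i\cup S)\in\chi(Q_i)$ for every admissible $S$, and the injectivity of $i\mapsto A_i$ are precisely that argument transplanted to pendant edges.

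The one step you should not wave through is the final count, and it is where your write-up does not actually deliver the stated bound. With the paper's definition of $k$-pendant (the $k$-subset $e'$ is the part of $e$ disjoint from all other edges), your attachment sets $A_i$ have size at most $r-k$, so the injection bounds $m$ by the number of subsets of $[n]$ of size at most $r-k$, i.e.\ by $\sum_{j\le r-k}\binom{n}{j}$, which is of order $\binom{n}{r-k}$ --- not $\binom{n}{k}$, as you assert when you say the injection ``yields the claimed bound on $m$.'' The two quantities agree (up to the stated form) only when $r-k\le k$. What the Claim~\ref{CLAIM:anti-Ramsey-splitting-no-multiset} argument genuinely gives is a removal bound of $(|F|-1)$ times the number of possible attachment sets; note that in the motivating application (expansions of a $k$-graph, where every edge of $H_F^r$ is $(r-k)$-pendant) the attachments are $k$-sets and the count is indeed $\binom{n}{k}$, consistent with Theorem~\ref{THM:antiRamsey-expansion-hypergraph-splitting}. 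So either the exponent in the statement must be read as the size $|e\setminus e'|$ of the non-pendant part (an index slip you are silently reproducing rather than checking), or, if $\binom{n}{k}$ with $k=|e'|$ is really intended and $r-k>k$, your argument is missing an idea and does not prove it. Make the count explicit. Separately, your boundary remark about small $n$ is not actually an argument: the injectivity step needs room for a $k$-set (indeed an $(r-|A_i|)$-set) outside $V(Q_i)\cup V(Q_j)$, so the claim ``for all integers $n\ge r$'' requires a short but genuine verification when $n<2v(F)+r$, which you have not supplied.
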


$\bullet$ The following question seems interesting, but we are uncertain whether it has already been addressed in the literature. 

\begin{problem}
    Let $r \ge 2$ be an integer. 
    Is it true that for every $\delta >0$ there exists an $r$-graph $F$ such that 
    \begin{align}\label{equ:antiRamsey-prob}
        \mathrm{ar}(n,F) - \mathrm{ex}(n,F_{-}) = \Omega(n^{r-\delta})? 
    \end{align}
    Furthermore, does there exists an $r$-graph $F$ such that~\eqref{equ:antiRamsey-prob} holds for all $\delta >0$? 
\end{problem}
\section*{Acknowledgement}
We are very grateful to the referees for their helpful comments.
We also thank Yucong Tang for bringing~\cite{LTY24a} to our attention.
\bibliographystyle{abbrv}
\bibliography{GeneralizedTuran}
\end{document}